\newtheorem{Thm}{Theorem}[section]
\newtheorem{Prop}[Thm]{Proposition}
\newtheorem{``Conj"}[Thm]{``Conjecture"}
\newtheorem{Ques}{Question}
\theoremstyle{remark}
\newtheorem{Rem}[Thm]{Remark}
\theoremstyle{definition}
\newtheorem{Def}[Thm]{Definition}
\newtheorem*{ack}{Acknowledgements}
\begin{document}

\title[K-moduli of Fano varieties]
{Compact moduli spaces of K\"ahler-Einstein Fano varieties}

\author{Yuji Odaka}
\date{Communicated by T.~Mochizuki. Received December 22, 2014. 
February 22, 2015. March 19, 2015. }
\subjclass[2010]{Primary 14K10; Secondary 33-00, 14J45, 14D20.}
\keywords{Moduli algebraic stack, Fano manifolds, Kahler-Einstein metrics, Gromov-Hausdorff limits} 
\address{Department of Mathematics, Faculty of Science, 
Kyoto University, Kyoto 606-8502, Japan}
\email{yodaka@math.kyoto-u.ac.jp}

\maketitle

\begin{abstract}
We construct geometrically compactified moduli spaces of 
K\"ahler-Einstein Fano manifolds. 
\end{abstract}


\section{Introduction}

In this paper, we construct 
compactified moduli algebraic spaces of Fano manifolds which have 
K\"ahler-Einstein metrics or equivalently 
(thanks to \cite{CDS}, \cite{Tia2}, combined with \cite{Ber},
\cite{Mab1},\cite{Mab2}) 
are K-polystable, following the (precise) conjecture in \cite{OSS} 
formulated with C.~Spotti and S.~Sun. 
The K-stability was originally introduced by G.~Tian \cite{Tia} 
and formulated in a purely algebraic way by S.~Donaldson \cite{Don0}. 
Brief explanations of the definition and the statement of the recent equivalence theorem 
with K\"ahler-Einstein metrics existence 
are given at the beginning of section \ref{precise.Kmod} and the subsection \ref{Kst.def}. 
Roughly speaking, our main result of this paper is: 

\begin{Thm}[Algebro-geometric statement, over $\mathbb{C}$]\label{Main.rough}
For any positive integer $n$, 
there is a ``canonical'' algebraic compactification $\bar{M}$ of 
the moduli space $M$ of K-polystable smooth 
Fano manifolds of dimension $n$, whose boundary paramterises K-polystable 
(kawamata-log-terminal $\mathbb{Q}$-Gorenstein smoothable) 
$\mathbb{Q}$-Fano varieties of the same dimension. 
\end{Thm}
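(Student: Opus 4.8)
The plan is to build $\bar{M}$ as a good moduli space of an Artin stack of pluri-anticanonically polarised $\mathbb{Q}$-Fano varieties, with the Gromov--Hausdorff theory of K\"ahler--Einstein Fano manifolds providing compactness. \emph{Step 1 (boundedness and a parameter space).} Combining the Chen--Donaldson--Sun and Tian resolution of the Yau--Tian--Donaldson conjecture with Donaldson--Sun's partial $C^{0}$-estimates and Gromov--Hausdorff precompactness, one produces an integer $m_{0}=m_{0}(n)$ such that for every K-polystable $\mathbb{Q}$-Gorenstein smoothable klt $\mathbb{Q}$-Fano $n$-fold $X$ the reflexive sheaf $\omega_{X}^{[-m_{0}]}$ is a very ample line bundle, with $h^{0}(X,\omega_{X}^{[-m_{0}]})$ taking only finitely many values and with Hilbert polynomial depending only on the deformation type. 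Fixing such data $(N,P)$ and arguing one piece at a time, every such $X$ occurs, $m_{0}$-pluri-anticanonically embedded, as a point of $\Hilb:=\Hilb^{P}(\mathbb{P}^{N})$. Inside $\Hilb$, let $U$ be the locus parametrising closed subschemes that are klt $\mathbb{Q}$-Fano varieties $X$ with $\mathcal{O}_{X}(1)\cong\omega_{X}^{[-m_{0}]}$; this is open, by openness of the klt and $\mathbb{Q}$-Fano conditions in $\mathbb{Q}$-Gorenstein flat families (Koll\'ar) together with comparison of polarisations in the Hilbert scheme. Let $Z\subseteq U$ be the reduced Zariski closure of the open locus $U^{\mathrm{sm}}$ parametrising smooth Fano manifolds. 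Then the points of $Z$ are precisely the $\mathbb{Q}$-Gorenstein smoothable klt $\mathbb{Q}$-Fano $n$-folds with the chosen polarisation, and $Z$ is of finite type over $\mathbb{C}$.

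\emph{Step 2 (moduli stack and good moduli space).} Let $G:=\PGL_{N+1}$ act on $Z$ and put $\mathcal{M}:=[Z/G]$, an algebraic stack of finite type over $\mathbb{C}$ with affine diagonal; the stabiliser at a point is $\Aut(X,\mathcal{O}_{X}(1))=\Aut(X)$, a linear algebraic group which is reductive — hence linearly reductive, since $\mathrm{char}=0$ — precisely when $X$ is K-polystable (Matsushima, via the K\"ahler--Einstein metric furnished by Chen--Donaldson--Sun). Around a point $z\in Z$ with $X_{z}$ K-polystable and $H:=\Aut(X_{z})$, a Luna-type slice — equivalently the miniversal $\mathbb{Q}$-Gorenstein deformation space of $X_{z}$ equipped with its $H$-action — gives a $G$-invariant \'etale neighbourhood $G\times_{H}W$ with $W$ an affine $H$-scheme, whose GIT quotient $W/\!/H$ is an \'etale chart near $[X_{z}]$. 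Gluing these charts (the overlaps being governed by the versal property) yields an algebraic space $\bar{M}$ of finite type over $\mathbb{C}$, a good moduli space for the substack of K-polystable objects; its points are the isomorphism classes of K-polystable $\mathbb{Q}$-Gorenstein smoothable klt $\mathbb{Q}$-Fano $n$-folds, and the open dense $M\subseteq\bar{M}$ is the moduli of smooth K-polystable Fano manifolds. That the K-polystable points of $Z$ suffice to produce all of $\bar{M}$ — i.e.\ that each fibre of the quotient contains one — uses the Gromov--Hausdorff picture invoked again in Step 3.

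\emph{Step 3 (separatedness and properness).} For separatedness I would show that a $\mathbb{Q}$-Gorenstein flat family of K-polystable $\mathbb{Q}$-Fanos over a punctured disc $\Delta^{*}$ has, up to isomorphism, at most one K-polystable completion: this is uniqueness of Gromov--Hausdorff (equivalently, K\"ahler--Einstein) limits, after Donaldson--Sun together with the uniqueness theory for KE metrics. For properness it is enough to complete, after a finite base change, any family over $\Delta^{*}$ of smooth K\"ahler--Einstein Fano $n$-folds by a K-polystable central fibre lying in $Z$: take the Gromov--Hausdorff limit, which by Donaldson--Sun is a normal $\mathbb{Q}$-Fano carrying a weak K\"ahler--Einstein metric; their partial $C^{0}$-estimate matches it with an algebraic $\mathbb{Q}$-Gorenstein flat limit inside $\Hilb$, compatibly with the pluri-anticanonical polarisation; Chen--Donaldson--Sun (and Tian) then give K-polystability; and the limit is automatically klt, hence a point of $Z$. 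With Step 2, this makes $\bar{M}$ proper over $\mathbb{C}$, i.e.\ compact. Finally, $\bar{M}$ is canonical: it is independent of $m_{0},N,P$ because it solves the moduli problem of K-polystable $\mathbb{Q}$-Gorenstein smoothable klt $\mathbb{Q}$-Fanos, equivalently it carries the Gromov--Hausdorff topology on isomorphism classes.

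\emph{Main obstacle.} The heart of the argument is the properness step: aligning the \emph{analytic} Gromov--Hausdorff limit of a degenerating family of KE Fanos with an \emph{algebraic} $\mathbb{Q}$-Gorenstein flat degeneration sitting in $Z$, compatibly with the polarisation, and deducing that the limit is K-polystable and klt. This is precisely where the Donaldson--Sun theory, and its interplay with the algebraic deformation theory of Fano varieties, does the essential work; the separatedness step rests on the same circle of ideas, and the local GIT presentation of Step 2 must be arranged to be compatible with it.
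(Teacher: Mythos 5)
Your overall architecture coincides with the paper's: a fixed pluri-anticanonical Hilbert scheme made finite type by Donaldson--Sun boundedness, Luna \'etale slices at points with reductive stabiliser, GIT quotients of the slices glued into an algebraic space, separatedness from uniqueness of K-polystable fillings, and compactness from Gromov--Hausdorff precompactness. The genuine gap is in your Step 2, at the sentence asserting that the glued space is ``a good moduli space for the substack of K-polystable objects'' whose ``points are the isomorphism classes of K-polystable \dots $\mathbb{Q}$-Fanos.'' The GIT quotient $W/\!/H$ parametrises the \emph{closed $H$-orbits} in the slice $W$, and there is no a priori reason these should be exactly the K-polystable (equivalently K\"ahler--Einstein) points. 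One inclusion --- KE implies GIT-polystable in the slice --- is \cite[Lemma 3.6]{OSS}; the converse holds only after shrinking $W$ to a smaller ${\it Aut}(X)$-invariant affine neighbourhood, and proving it is the main new technical content of the paper (Theorem \ref{local.GIT}). There one argues by contradiction: constructibility of both the polystable locus and the KE locus (the latter via uniform testability of K-polystability at a fixed exponent $m$, from \cite{Od2} combined with \cite{CDS}, \cite{SSY}) produces, from a hypothetical polystable-but-non-KE stratum accumulating at $[X]$, two polystable degenerations of a family of smooth KE Fano manifolds --- one inside the slice, one given by the Donaldson--Sun limit --- which are then played against each other using the separatedness theorem (\cite{LWX} v1 together with \cite{SSY} and \cite{CDS}) and uniqueness of polystable limits in classical GIT. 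Without this step your $\bar{M}$ could contain closed orbits that are not KE, the comparison map with $\bar{M}^{GH}$ would fail to be a bijection, and the boundary would not parametrise what the theorem claims.

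Two smaller points. First, you set $\mathcal{M}=[Z/G]$ for the whole closure $Z$, but stabilisers away from the KE locus need not be reductive, so a good moduli space of all of $[Z/G]$ need not exist; the paper instead restricts to the open ${\it PGL}$-invariant union of the saturations of finitely many slices (finitely many by quasi-compactness of ${\it Hilb}$) and takes the stack to be the quotient of that open set. Second, the separatedness statement you invoke in Step 3 is only available (and is only used in the paper) for \emph{generically smooth} $\mathbb{Q}$-Gorenstein families, which is precisely why the whole construction is confined to the $\mathbb{Q}$-Gorenstein smoothable locus; stating it for arbitrary families of K-polystable $\mathbb{Q}$-Fanos over $\Delta^{*}$ overclaims what the cited results give.
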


More precisely speaking, the compactification $\bar{M}$ 
is an algebraic space in the sense of Artin \cite{Art} in the above result. 
For most precise meanings, 
see (section \ref{precise.Kmod} and) Theorem \ref{Main.thm}. 
We further expect the compactification to be a
\textit{projective scheme}, following the idea of Fujiki-Schumacher \cite{FS}. 
See the precise expectation in \cite[subsections 3.4, 6.2]{OSS} or our section $2$ (which 
follows \cite{OSS}). 

The corresponding complex differential geometric (roughest) restatement of 
Theorem \ref{Main.rough} is the following. 

\begin{Thm}[Differential geometric re-statement]\label{Main.rough.DG}
The Gromov-Hausdorff compactification of the moduli space of 
K\"ahler-Einstein (smooth) Fano manifolds 
has a structure of compact Hausdorff Moishezon analytic space. 
\end{Thm}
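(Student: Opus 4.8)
The plan is to derive this from the algebro-geometric Theorem~\ref{Main.rough} (in the precise form of Theorem~\ref{Main.thm}) by exhibiting a canonical homeomorphism between the Gromov--Hausdorff compactification and the underlying topological space of the analytification $\bar{M}^{\mathrm{an}}$ of the algebraic space $\bar{M}$, and then transporting the Moishezon structure along it.

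First I would pass to a setting with uniform geometric bounds and pin down the points set-theoretically. Fixing the dimension $n$ and, since smooth Fano $n$-folds form a bounded family, one of the finitely many possible anticanonical volumes $V=(-K)^{n}$, every K\"ahler--Einstein Fano manifold in play satisfies $\mathrm{Ric}(\omega)=\omega$, hence has diameter bounded by Myers' theorem and volume $V$; by Gromov's compactness theorem the relevant moduli space is precompact in the Gromov--Hausdorff topology, so its closure $\bar{M}^{\mathrm{GH}}$ is a compact metric space. Its points are identified by the structure theory of Gromov--Hausdorff limits: by the work of Donaldson and Sun (building on Tian) every such limit $X_{\infty}$ carries a canonical structure of normal klt $\mathbb{Q}$-Fano variety admitting a weak K\"ahler--Einstein metric, occurring as the central fibre of a $\mathbb{Q}$-Gorenstein smoothing of the approximants, and this algebraic structure is determined by the metric; by Berman~\cite{Ber} such $X_{\infty}$ is K-polystable, and conversely, by the singular analogue of \cite{CDS}, \cite{Tia2} in the $\mathbb{Q}$-Gorenstein smoothable case, every K-polystable $\mathbb{Q}$-Gorenstein smoothable klt $\mathbb{Q}$-Fano variety admits a weak K\"ahler--Einstein metric and is realised as such a limit. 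Hence sending a limit space to the moduli point of its underlying variety gives a canonical bijection $\bar{M}^{\mathrm{GH}}\to\bar{M}(\mathbb{C})$.

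Next I would show this bijection is continuous, hence a homeomorphism. By Theorem~\ref{Main.thm} the compactification $\bar{M}$ is a proper algebraic space of finite type over $\mathbb{C}$, so $\bar{M}^{\mathrm{an}}$ is compact and Hausdorff; a continuous bijection from the compact space $\bar{M}^{\mathrm{GH}}$ to the Hausdorff space $\bar{M}^{\mathrm{an}}$ is automatically a homeomorphism, so it suffices to prove continuity. For this I would invoke the uniform partial $C^{0}$-estimate and uniform Bergman-kernel bounds of Donaldson--Sun: these embed all members of the relevant families, uniformly, into a fixed projective space $\mathbb{P}^{N}$ by an $L^{2}$-orthonormal basis of $H^{0}(-mK)$ for a single large $m$, in such a way that Gromov--Hausdorff convergence of a sequence implies subconvergence of the embedded subschemes in the Hilbert scheme to (a variety isomorphic to) the limit; since the Hilbert scheme is proper and $\bar{M}^{\mathrm{an}}$ is Hausdorff, the full sequence of moduli points converges to the moduli point of the limit, which is exactly continuity of $\bar{M}^{\mathrm{GH}}\to\bar{M}^{\mathrm{an}}$.

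Finally, by Artin's theorem~\cite{Art} a proper algebraic space over $\mathbb{C}$ has for analytification a compact Moishezon analytic space, so $\bar{M}^{\mathrm{an}}$ is one; transporting this structure through the homeomorphism above equips $\bar{M}^{\mathrm{GH}}$ with the asserted structure of compact Hausdorff Moishezon analytic space. I expect the crux to be the continuity step: the set-theoretic bijection and the Moishezon conclusion are essentially formal once Theorem~\ref{Main.thm} and the metric-to-algebra dictionary are available, whereas running the Donaldson--Sun embedding argument \emph{uniformly over the families} --- so that the pluri-anticanonical embeddings do not degenerate, which needs uniform volume and multiplicity bounds and a uniform partial $C^{0}$-estimate --- together with checking that a single algebraic family cannot have two distinct Gromov--Hausdorff limits (again via uniqueness of the weak K\"ahler--Einstein metric, Berman~\cite{Ber}) is the substantive part.
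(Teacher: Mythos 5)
Your proposal is correct and follows essentially the same route as the paper: the theorem is treated there as a corollary of Theorem \ref{Main.thm} via Proposition \ref{homeo}, whose proof is exactly your argument --- continuity of $\bar{M}^{GH}\to\bar{M}$ from the Donaldson--Sun embedding into a fixed Hilbert scheme, Hausdorffness of the target from the separatedness theorem (Theorem \ref{sep}), compactness of $\bar{M}^{GH}$ from Gromov compactness, and the fact that a continuous bijection from a compact space to a Hausdorff space is a homeomorphism, with the Moishezon structure then transported from the proper algebraic space $\bar{M}$. The only cosmetic difference is that you attribute the uniqueness of limits within a family to Berman's uniqueness of weak K\"ahler--Einstein metrics, where the paper invokes the separatedness theorem of \cite{LWX}+\cite{SSY}+\cite{CDS}.
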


This compactification extends that of the explicit 
$2$-dimensional case study in \cite{OSS}, which was previously and pioneeringly 
proved in the case of complete intersection of $2$ quadric $3$-folds (i.e. degree $4$ del Pezzo 
surfaces) in the old work of Mabuchi-Mukai \cite{MM} much before the 
introduction of K-stability. 

This contrasts to the ``canonically polarised''case (i.e. of ample canonical class) the idea which (for dimensions higher than $1$) goes back to Shepherd-Barron 
\cite{SB}. 
This case was systematically studied by Koll\'ar-Shepherd-Barron \cite{KSB} for surfaces, 
extended by Alexeev to higher dimension \cite{Ale}, 
and now being accomplished with technical details (a book by Professor 
Koll\'ar \cite{Kol} with all the details are being expected to appear). 
Coining the main contributors to the construction in their honors, 
that theory is often attached with 
\textit{Koll\'ar-Sherpherd-Barron-Alexeev}, or with abbreviation ``KSBA''. 

The novel difference is that in our case 
all the varieties parametrised are 
\textit{normal} (even kawamata-log-terminal), hence irreducible, 
while KSBA degenerations are 
usually non-normal as even the simplest case - stable curve \cite{DM} - 
can have up to $3g-3$ components. 

However, in the meantime those two moduli compactifications can be seen 
in a unified point of view, i.e. as 
examples of moduli of K-(semi)stable varieties since the 
semi-log-canonical varieties of ample canonical class is also K-stable by 
\cite{Od1} (``K-moduli'' cf. e.g., \cite[section 5]{Od0}, \cite[Chapter 1]{Spo}). 
Inspired by the breakthroughs \cite{DS} and \cite{Spo}, 
in \cite[Conjecture 6.2]{OSS}, 
the precise formulation of the K-moduli conjecture for Fano varieties case is worked out and 
we will quickly review a part of this in the next section. 

A key technical result 
may be interesting of its own. That is, 
we will establish the following deformation 
picture. The (easier) half of the following statement is proved in 
\cite{OSS} and 
the rest is essentially depending on \cite{LWX}, \cite{SSY} 
which in turn 
use the idea of \emph{Donaldson's continuity method} \cite{CDS}, \cite{Tia2}. 
Our statement is as follows, but we again leave the 
detailed statement to Theorem \ref{local.GIT}. 

\begin{Thm}\label{local.GIT.rough}
If a K\"ahler-Einstein $\mathbb{Q}$-Fano variety $X$ is 
$\mathbb{Q}$-Gorenstein smoothable, 
then in a local $\mathbb{Q}$-Gorenstein (Kuranishi) deformation space of $X$ 
which we denote by ${\it Def}(X)$, the 
existence of K\"ahler-Einstein metric on the corresponding $\mathbb{Q}$-Fano variety is equivalent 
to the GIT polystability of the ${\it Aut}(X)$-action on ${\it Def}(X)$. 
\end{Thm}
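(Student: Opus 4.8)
The plan is to replace the (a priori analytic) Kuranishi picture by a finite-dimensional equivariant GIT problem, deduce the easy implication from \cite{OSS}, and obtain the hard implication from the deformation theory of Kähler--Einstein metrics on smoothable $\mathbb{Q}$-Fano varieties in \cite{SSY} and \cite{LWX}. First I would fix $m\gg 0$ so that $-mK_{X}$ is very ample and the versal $\mathbb{Q}$-Gorenstein deformation is realised as a family of anticanonically polarised varieties; then identify $\mathit{Def}(X)$ with an $\mathit{Aut}(X)$-invariant, locally closed analytic subgerm of the first-order deformation space $T^{1}_{X}$, which is itself an $\mathit{Aut}(X)$-representation, so that $G:=\mathit{Aut}(X)$ acts linearly and the germ is algebraisable (Artin approximation, using that $X$ is projective with ample anticanonical class). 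Hence ``GIT polystability of the $\mathit{Aut}(X)$-action on $\mathit{Def}(X)$'' makes sense with respect to the natural (CM) linearisation. Note $G$ is reductive by the Matsushima criterion since $X$ is Kähler--Einstein, and the origin $0\in\mathit{Def}(X)$, which corresponds to $X$ itself and is $G$-fixed, is GIT-polystable.

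For the easy implication (Kähler--Einstein $\Rightarrow$ GIT-polystable), which is the half already in \cite{OSS}: suppose $X_{t}$ admits a (possibly singular) Kähler--Einstein metric, hence is K-polystable by the Yau--Tian--Donaldson type equivalence recalled above (\cite{CDS}, \cite{Tia2}, \cite{Ber}, \cite{Mab1}, \cite{Mab2}). Its stabiliser in $G$ is $\mathit{Aut}(X_{t})$, again reductive. If the orbit $G\cdot t$ were not closed, Hilbert--Mumford would produce a one-parameter subgroup $\lambda$ of $G$ with $t_{0}:=\lim_{s\to 0}\lambda(s)\cdot t$ in a strictly smaller orbit; applying $\lambda$ fibrewise over $\mathit{Def}(X)$ yields a test configuration for $X_{t}$ with central fibre $X_{t_{0}}$, whose Donaldson--Futaki invariant $\DF$ equals, up to a fixed positive multiple, the Mumford weight of $\lambda$ on the CM linearisation at $t$. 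Since this weight is non-positive for a degeneration lying in the orbit closure, K-polystability of $X_{t}$ forces the test configuration to be a product, and then versality of $\mathit{Def}(X)$ forces $t_{0}\in G\cdot t$, a contradiction. Thus $G\cdot t$ is closed with reductive stabiliser, i.e. $t$ is GIT-polystable.

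For the hard converse (GIT-polystable $\Rightarrow$ Kähler--Einstein) I would argue with the Kähler--Einstein locus $U:=\{\,t\in\mathit{Def}(X): X_{t}\text{ admits a K\"ahler--Einstein metric}\,\}$. It is $G$-invariant and contains $0$. The first input is that $U$ is Zariski-open: this is Donaldson's openness/continuity method for Kähler--Einstein metrics on $\mathbb{Q}$-Gorenstein smoothable $\mathbb{Q}$-Fano varieties, which is exactly what \cite{SSY} establishes (building on \cite{CDS}, \cite{Tia2}). By the previous paragraph $U\subseteq \mathit{Def}(X)^{\mathrm{ps}}$, so $U$ maps into the GIT quotient $\mathit{Def}(X)/\!\!/G$. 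The second, crucial input is that the image of $U$ is also \emph{closed} in a neighbourhood of the origin of $\mathit{Def}(X)/\!\!/G$ --- equivalently, that the moduli problem is proper: a sequence of Kähler--Einstein fibres subconverges in the Gromov--Hausdorff sense to a Kähler--Einstein $\mathbb{Q}$-Fano which, by Donaldson--Sun type algebraisation, embeds into the family, and K-polystability pins down its $G$-orbit. This properness/separatedness is the content of \cite{LWX} (with \cite{SSY}). Granting it, the image of $U$ is the whole germ of $\mathit{Def}(X)/\!\!/G$; so given a GIT-polystable $t$, pick $t'\in U$ with the same image in the quotient, whence $G\cdot t'=G\cdot t$ (polystable orbits are separated by invariants) and $X_{t}\cong X_{t'}$ is Kähler--Einstein. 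Combined with the equivalence theorem, this proves the asserted equivalence.

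The main obstacle is the hard direction, and within it the two analytic facts that $U$ is open and that its image in the GIT quotient is closed; everything else is formal bookkeeping translating these into a statement about the $\mathit{Aut}(X)$-action on $\mathit{Def}(X)$. These facts are precisely where \cite{SSY} and \cite{LWX} are invoked --- through Donaldson's continuity method and Gromov--Hausdorff compactness --- so the role of the present argument is to set up the equivariant GIT framework and to supply the (easier) polystability implication of \cite{OSS}, rather than to reprove the deep existence and compactness results.
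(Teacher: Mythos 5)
Your finite-dimensional set-up and your treatment of the easy implication are consistent with the paper (which simply quotes \cite[Lemma 3.6]{OSS} for that direction). The hard direction, however, has a genuine gap: the ``first input'', that the K\"ahler--Einstein locus $U\subset {\it Def}(X)$ is Zariski-open, is false precisely in the cases where the theorem has content. The theorem asserts $U={\it Def}(X)^{\mathrm{ps}}$, and the polystable locus of a linear action of a positive-dimensional reductive group is in general \emph{not} a neighbourhood of the origin: every neighbourhood of $0$ contains points whose ${\it Aut}(X)$-orbit is non-closed (its closure contains $0$), and such points are not polystable, hence --- by the easy direction --- not K\"ahler--Einstein. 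The Mukai--Umemura $3$-fold (\cite{Tia}, \cite[subsection 5.3]{Don1}) is the standard instance. What \cite{SSY} actually establishes is openness of the KE condition under the hypothesis that the central fibre has \emph{discrete} automorphism group; in the non-discrete case the correct local statement \emph{is} the theorem you are trying to prove, so invoking openness of $U$ begs the question. (A secondary issue: in your ``closedness of the image'' step, the Gromov--Hausdorff limit of KE fibres is reached only after applying a divergent sequence of elements of ${\it PGL}(N+1)$, not of ${\it Aut}(X)$, so the limit a priori leaves the slice and must be brought back into it; this is exactly where the separatedness theorem has to be deployed carefully.)

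The paper avoids any openness claim. It shows that both $V_{[X]}^{\mathrm{ps}}$ and $V_{[X]}\cap {\it Hilb}^{KE}$ are merely \emph{constructible} (the latter via the uniform equivalence of KE existence with $K_{(m)}$-polystability for a single $m$, following \cite{Od2}, \cite{SSY}, \cite{CDS}), and then argues by contradiction: if the two loci differed on every shrinking of the slice, one chooses points $P_i$ in the difference converging to $[X]$ and smooth KE manifolds $P_{i,j}\to P_i$; Donaldson--Sun \cite{DS} produces KE Gromov--Hausdorff limits $Q_i\to Q$ after moving by elements of ${\it SL}$, and an ${\it SL}$-equivariant compactification of ${\it SL}$ with a resolution of indeterminacy realises two one-parameter degenerations with KE central fibres $[X]$ and $Q$; the separatedness theorem (\cite{LWX}+\cite{SSY}+\cite{CDS}) forces $Q$ into the ${\it SL}$-orbit of $[X]$, and uniqueness of polystable limits in classical GIT then yields the contradiction. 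If you wish to keep your quotient-space formulation, you must replace ``$U$ is open'' by an argument that $U$ meets every closed orbit near $0$, and that is essentially the paper's contradiction argument in disguise.
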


As we mentioned, we have already proved in \cite[Lemma 3.6]{OSS} 
that the classical GIT polystability 
of points corresponds to K\"ahler-Einstein $\mathbb{Q}$-Fano varieties, 
which is the easier half of the above theorem \ref{local.GIT.rough}. 
This extends the picture of \cite{Tia}, \cite{Don1} for the commonly studied 
``Mukai-Umemura $3$-fold" case, and
the general result by Szekelyhidi \cite{Sze} which depends 
on the infinite dimensional implicit function theorem. Our proof 
essentially depends on the recent development for one-parameter deformations cases 
in \cite{LWX} and \cite{SSY}. 
We expect that the $\mathbb{Q}$-Gorenstein smoothability condition 
is unnecessary but do not know how to prove in 
that generality, by current technologies. 
It is also related to the list of questions for 
furture in the final section. 

Actually many of the main technical ingredients of the proof are mostly already in 
previous papers 
in this several years 
i.e. \cite{DS}, \cite{Spo}, \cite{Od2}, 
\cite{OSS} and recent \cite{SSY}, \cite{LWX} 
and this paper would not claim elaboration of the essential ideas from before. 

\begin{ack}
This paper originally 
grew out from much more personal and incomplete notes sent to and shared with 
Cristiano Spotti, Song Sun, Chengjian Yao from October $2014$ that is 
three months after when the results of \cite{SSY} 
were informed to the author. 
It was in July of $2014$ during the visit of S.Sun to Kyoto and Tokyo, 
and also there were several seminar 
talks made by them some months before the appearance of \cite{SSY}. 
The author is grateful to all of their neat clarification 
about their results as well as their helpful comments on the draft, 
and would like to say that they also 
made essential contributions 
partially through \cite{SSY} (and \cite{OSS}, \cite{DS}). 
We also thank Jarod Alper for his kind communications 
about sub-section \ref{slice.sec}. 

When the author started to expect ``K-moduli'' \cite[section 5]{Od0}, 
he struggled but could never imagine how to prove even in Fano case and 
the partial proof obtained here just makes clear that he is watching the beauty 
``\textit{on the shoulder of (modern) Giants}'', 
especially for the case of this paper as we do not bring any essentially new idea 
but simply combining the circle of ideas and some standard arguments. 
I would like to take this 
opportunity to thank all the professors, colleagues and friends for the tutorials. 

While finishing the first manuscript of this paper, the author learnt the possibility of 
partial overlap with the revision (to their $2$nd version) 
of \cite{LWX} and our paper. We would like to clarify 
that we worked out independently and both results 
appear on the same day on the internet. 
Finally the author acknowledges the partial support by JSPS Kakenhi 30700356 (Wakate (B)). 
\end{ack}


\section{Precise formulation of K-moduli}\label{precise.Kmod}

In this section, we put the precise formulation of the K-moduli. 
Before that, 
let us recall that the K-stability of a $\mathbb{Q}$-Fano variety $X$ is, roughly speaking, 
defined as positivity of all the Donaldson-Futaki invariants (a variant of the GIT weight) 
associated to every 
one parameter isotrivial degenerations of $X$. We put more precision later in the 
subsection \ref{Kst.def}. The recent development shows the following. 

\begin{Thm}[\cite{CDS}, \cite{Tia2} for smooth $X$, \cite{SSY} for singular $X$]

For any $\mathbb{Q}$-Gorenstein smoothable klt $\mathbb{Q}$-Fano variety $X$, 
the existence of K\"ahler-Einstein metric is equivalent to the K-polystability of $X$. 
\end{Thm}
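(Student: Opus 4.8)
The plan is to reduce everything to \emph{Donaldson's continuity method} for conical K\"ahler--Einstein metrics: for smooth $X$ this is \cite{CDS}, \cite{Tia2}, while for singular $X$ the argument of \cite{SSY} superimposes a deformation-theoretic layer that exploits $\mathbb{Q}$-Gorenstein smoothability. One implication is soft, so I would dispose of it first: if $X$ carries a (weak) K\"ahler--Einstein metric, then by Berman's convexity and coercivity results for the Ding functional (\cite{Ber}, together with \cite{Mab1}, \cite{Mab2}) its asymptotic slope along any one-parameter $\mathbb{Q}$-Gorenstein isotrivial degeneration of $X$ is non-negative and bounds the Donaldson--Futaki invariant of that degeneration from below; chasing the equality case (which forces a product degeneration) yields exactly K-polystability. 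The substance is the converse, and the rest of the plan concerns it.

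For the converse I would fix $m\gg0$ with $-mK_X$ very ample, pick a smooth divisor $D$ in a suitable pluri-anticanonical system $|-\lambda K_X|$, and study the one-parameter family of conical K\"ahler--Einstein equations $\mathrm{Ric}(\omega_t)=t\,\omega_t+(1-t)\,\theta$ for $t\in(0,1]$, where $\theta=\tfrac1\lambda[D]$ is a rescaled current of integration, so that $\omega_1$ is an honest K\"ahler--Einstein metric and the cone angle along $D$ approaches $2\pi$ as $t\to1$. The set $S\subseteq(0,1]$ of parameters admitting a solution is nonempty --- for $t$ near $0$ one is in the small cone angle regime, handled by arguments in the spirit of Aubin--Yau --- and open, by Donaldson's linear theory in the cone-adapted H\"older spaces. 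The crux is closedness of $S$: along $t_i\to t_\infty:=\sup S$ with solutions $\omega_i$, Cheeger--Colding--Tian theory together with the Donaldson--Sun partial $C^0$-estimate \cite{DS} produce a Gromov--Hausdorff limit which is a normal $\mathbb{Q}$-Fano variety $X_\infty$ carrying a weak conical K\"ahler--Einstein metric, and which sits --- via the pluri-anticanonical embedding by $|-mK|$ --- inside a test configuration for $X$; one then checks that the Donaldson--Futaki invariant of (a normalized modification of) this configuration is $\leq0$, so K-polystability of $X$ rules out $t_\infty<1$ and forces $X_\infty\cong X$. Hence $1\in S$ and $X$ is K\"ahler--Einstein.

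The \textbf{main obstacle} I expect is this closedness step, and within it two technical hearts: (i) upgrading the Gromov--Hausdorff limit to a genuine \emph{algebraic} degeneration --- normality of $X_\infty$, an effective/uniform partial $C^0$-estimate, and compatibility of the limiting polarization --- so that a bona fide test configuration of $X$ is produced and the hypothesis of K-polystability can actually be invoked; and (ii) propagating all a priori estimates uniformly as the cone angle degenerates to $2\pi$. For singular $X$ the extra input of \cite{SSY} is to fix a $\mathbb{Q}$-Gorenstein smoothing $\mathcal X\to\Delta$ of $X$, run the continuity method on the smooth fibres $\mathcal X_s$ ($s\neq0$), and then show that the resulting conical K\"ahler--Einstein metrics have uniformly bounded geometry and converge, as $s\to0$, to a weak K\"ahler--Einstein metric on $X$ itself; here K-polystability of $X$, together with separatedness-type inputs in the spirit of \cite{LWX}, is what prevents the limit from jumping to a different central fibre. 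Finally, uniqueness of the K\"ahler--Einstein metric modulo $\Aut^0(X)$ (Bando--Mabuchi, and its singular refinement in \cite{SSY}) rounds off the equivalence.
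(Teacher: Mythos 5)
This theorem is not proved in the paper at all: it is imported verbatim from \cite{CDS}, \cite{Tia2} (smooth case) and \cite{SSY} (singular smoothable case), so there is no in-paper argument to compare against. Your outline is a faithful reconstruction of the strategy of those references --- Berman's Ding-functional argument for the easy direction, Donaldson's conical continuity method with the Donaldson--Sun partial $C^0$-estimate and the conversion of the Gromov--Hausdorff limit into a test configuration for closedness, and the $\mathbb{Q}$-Gorenstein smoothing plus non-jumping argument of \cite{SSY} in the singular case --- and you correctly identify the genuinely hard steps rather than eliding them.
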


\noindent
For the definition of K\"ahler-Einstein metrics on singular klt (kawamata-log-terminal) 
$\mathbb{Q}$-Fano varieties, we refer to \cite{Ber} or \cite{SSY} for instance. 

Now we explain our 
precise statement of the K-moduli existence, partially recalling \cite{OSS}. The precision on local 
deformation picture will be put only at the final section (Theorem \ref{local.GIT}). 

For partial self-containedness and the convenience for the readers, 
we recall the notion of 
\textit{KE moduli stack}, introduced for algebraically oriented people. 
We also note that in \cite{OSS}, the notion of 
\textit{KE analytic moduli spaces} 
(for analytic oriented people) was introduced as well. 
For the general theory of algebraic stack, 
we would like to refer to textbooks such as \cite{LM}. 

For those who are not familiar with stacky language, we note that 
algebraic stack (appearing here) is more or less an algebraic scheme 
(such as Hilbert scheme, Chow variety) attached with ``glueing data'' 
which identifies points on the scheme which ``parametrises the same objects''. 
Artin stack is the most general category of algebraic stack, allowing 
``non-discrete automorphism groups'' of the parametrised objects, 
while Deligne-Mumford stack is, roughly speaking, 
for those objects with only discrete automorphism groups. 
The point of introduction of stacky language here is, more or less, 
to make the statement of most precise form with the information on flat families 
of Fano varieties (in concern with K\"ahler-Einstein metrics). 

\begin{Def}[{\cite[Definition 3.13]{OSS}}]\label{KE.stack}
A moduli algebraic (Artin) stack $\bar{\mathcal{M}}$ of $\mathbb{Q}$-Gorenstein 
family of 
$\mathbb{Q}$-Fano varieties is called a \textit{KE moduli stack} if 
\begin{enumerate}
\item there is a categorical moduli algebraic space $\bar{M}$ 
\item it has an \'etale covering $\{ [U_i/G_i] _{i}\}$ of $\bar{\mathcal{M}}$ 
where $U_i$ is affine algebraic scheme 
and $G_{i}$ is some reductive algebraic group, on which there is 
some $G_i$-equivariant 
$\mathbb{Q}$-Gorenstein flat family of $\mathbb{Q}$-Fano varieties. 
\item Closed $G_{i}$-orbits in $U_i$ 
parametrize $\mathbb{Q}$-Gorenstein smoothable K\"ahler-Einstein $
\mathbb{Q}$-Fano varieties via the families of (ii), and via the 
canonical map $\varphi_{i}\colon U_{i}\to \bar{M}$, each such orbit 
maps to a closed point of $\bar{M}$ and every closed point of $\bar{M}$ 
can be obtained in this manner for some $i$. 
\end{enumerate}
We call the coarse algebraic space $\bar{M}$ of $(i)$ a \emph{KE moduli space}. 
If it is an algebraic variety, we also call it \emph{KE moduli variety}.

\end{Def}

Recall that $\bar{M}$ is the \textit{coarse moduli algebraic space} of 
the Artin stack $\bar{\mathcal{M}}$ means that 
there is a morphism $\bar{\mathcal{M}}\rightarrow \bar{M}$ and 
it is universal among the morphisms from $\bar{\mathcal{M}}$ to algebraic spaces. 
In our case, thanks to the condition (ii) and (iii) $\bar{M}$ is also 
set-theoritically ``nice'' i.e. bijectively corrsponds to 
K\"ahler-Einstein $\mathbb{Q}$-Fano varieties. 

For the definition of more differential geometric version ``KE analytic moduli space'', we refer to \cite[Definition 3.14, 3.15]{OSS} since we do not use the notion in this paper and moreover 
it naturally follows from our construction in this paper that $\bar{M}$ satisfies the defining  
conditions of the notion. 

In this paper, we prove the Conjecture 6.2 in \cite{OSS} in $\mathbb{Q}$-Gorenstein smoothable case, 
i.e. those which contain the moduli of smooth Fano manifolds. 

\begin{Thm}[Refined statement of the K-moduli existence]\label{Main.thm}
We fix the dimension of $\mathbb{Q}$-Fano varities in concern, as $n$. 
There is a KE moduli stack $\bar{\mathcal{M}}^{GH}$, in the sense of \cite{OSS}. 
In particular, $\bar{\mathcal{M}}^{GH}$ has a coarse moduli algebraic space 
$\bar{M}$ as a proper separated algebraic space, and $\bar{\mathcal{M}}^{GH}$ is 
good in the sense of Alper \cite{Alp}. 

Then from the Gromov-Hausdorff compactification $M^{GH}
$ (in the sense of \cite{DS}, \cite{OSS}), which is a priori just a compact Hausdorff metric space, there is a homeomorphism $$\Phi\colon \bar{M}^{GH} \rightarrow \bar{M},$$ such that $[X]$ and $\Phi([X])$ parametrize isomorphic $\mathbb{Q}$-Fano varieties 
for any $[X]\in \bar{M}^{GH}$. 
\end{Thm}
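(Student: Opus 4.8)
The plan is to assemble the statement from three pieces: the local deformation picture (Theorem~\ref{local.GIT}), a gluing/étale-slice argument to pass from local GIT quotients to a global Artin stack, and a comparison of two a~priori different compact topologies (the Gromov--Hausdorff one and the one induced from the algebraic construction). First I would fix a degree and an auxiliary polarisation and sit inside a suitable Hilbert or Chow scheme parametrising $\mathbb{Q}$-Gorenstein smoothable klt $\mathbb{Q}$-Fano varieties of dimension $n$ (together with an $m$-th Veronese embedding for $m\gg 0$), noting that boundedness of this family of Fanos—already available from the Gromov--Hausdorff theory of \cite{DS} together with \cite{SSY}, \cite{LWX}—guarantees such an $m$ works uniformly. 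Let $Z$ be the locally closed subscheme of points whose associated variety is a klt $\mathbb{Q}$-Gorenstein smoothable $\mathbb{Q}$-Fano; $\PGL$ acts on $Z$ and the stack we want is, morally, $[Z/\PGL]$, but this stack need not be the moduli stack until we understand closed orbits.

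Next I would, for each K-polystable (equivalently, K\"ahler--Einstein by the equivalence theorem quoted before the statement) point $[X]\in Z$, invoke Theorem~\ref{local.GIT}: in the Kuranishi $\mathbb{Q}$-Gorenstein deformation space ${\it Def}(X)$, K-polystability of a fibre is equivalent to GIT polystability of the ${\it Aut}(X)$-action, and ${\it Aut}(X)$ is reductive precisely at such $X$ (this last reductivity is itself a consequence of the KE metric, via Matsushima-type arguments, and is part of the input of \cite{LWX}, \cite{SSY}). Then Luna's étale slice theorem (in the stacky/algebraic-space form, as in Alper's work \cite{Alp} on good moduli spaces—this is the point the acknowledgement to J.~Alper refers to) produces an étale neighbourhood of $[X]$ in $[Z/\PGL]$ of the form $[U_X/{\it Aut}(X)]$ with $U_X$ an ${\it Aut}(X)$-invariant locally closed affine inside ${\it Def}(X)$. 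These charts $\{[U_X/{\it Aut}(X)]\}$ cover an open substack $\bar{\mathcal{M}}^{GH}\subseteq [Z/\PGL]$; taking $U_i=U_X$, $G_i={\it Aut}(X)$ verifies condition~(ii) of Definition~\ref{KE.stack}, and Theorem~\ref{local.GIT} plus the description of closed orbits in a GIT quotient verifies condition~(iii). That $\bar{\mathcal{M}}^{GH}$ is an Artin stack admitting a good moduli space $\bar{M}$ that is an algebraic space follows from Alper's gluing criterion once the local good quotients $U_X/\!\!/{\it Aut}(X)$ are shown to glue—which is exactly what the étale-slice compatibility gives. Properness and separatedness of $\bar{M}$ I would get from the valuative criterion: separatedness from uniqueness of K-polystable degenerations (this is where $\mathbb{Q}$-Gorenstein smoothability and the results of \cite{LWX}, \cite{SSY} enter crucially), and properness (existence of limits) from the Gromov--Hausdorff compactness of \cite{DS}, which supplies, for any one-parameter family, a K-polystable Gromov--Hausdorff limit that, by \cite{DS}, is algebraic and sits in $Z$.

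For the homeomorphism $\Phi$, I would first define it set-theoretically: a point of $M^{GH}$ is the isometry class of a KE $\mathbb{Q}$-Fano Gromov--Hausdorff limit, which by \cite{DS} underlies a unique (up to isomorphism) klt $\mathbb{Q}$-Gorenstein smoothable $\mathbb{Q}$-Fano $X$; send it to the closed point of $\bar{M}$ carrying $X$. Well-definedness and bijectivity are immediate from condition~(iii) of Definition~\ref{KE.stack} together with the uniqueness in \cite{DS}. Continuity in one direction should come from the fact that algebraic families specialise compatibly with Gromov--Hausdorff convergence—convergence of a sequence of KE Fanos in a fixed Hilbert scheme implies Gromov--Hausdorff subconvergence to the KE metric on the limit, by the continuity/compactness results of \cite{DS} (and \cite{SSY} in the singular case); and conversely, a Gromov--Hausdorff convergent sequence can, after passing to a subsequence, be realised as a convergent sequence of points in $Z$ by the embedding-by-pluri-anticanonical-sections argument of \cite{DS}, so the inverse is continuous as well. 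Since $\bar{M}^{GH}$ is compact and $\bar{M}$ is Hausdorff (being a separated algebraic space, its complex points are Hausdorff), a continuous bijection between them is automatically a homeomorphism, which also re-proves compactness of $\bar{M}$ from the differential-geometric side.

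\medskip

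\textbf{Main obstacle.} The technical heart—and the step I expect to be hardest—is the passage from the one-parameter statements of \cite{LWX} and \cite{SSY} to the full local GIT picture of Theorem~\ref{local.GIT} over the whole (higher-dimensional) base ${\it Def}(X)$, together with the reductivity of ${\it Aut}(X)$ at KE points; everything else is a by-now-standard étale-slice-and-glue argument à la Alper once that input is in hand. A secondary subtlety is matching the two topologies honestly: one must know not merely that the underlying sets agree but that Gromov--Hausdorff convergence is \emph{equivalent} to convergence of the corresponding points in the Hilbert scheme (modulo $\PGL$), and the non-trivial direction here—that GH-convergence forces algebraic convergence of the canonically embedded models—again rests on \cite{DS} rather than on anything elementary.
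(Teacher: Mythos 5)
Your proposal follows essentially the same route as the paper: pluri-anticanonical embedding into a fixed Hilbert scheme with uniform $m$ via \cite{DS}, Luna \'etale slices at KE points (using reductivity of ${\it Aut}(X)$ from the Matsushima-type theorem), Theorem~\ref{local.GIT} to identify the polystable locus with ${\it Hilb}^{KE}$, gluing the quotient stacks $[V_{[X_i]}/{\it Aut}(X_i)]$ inside $[{\it Hilb}/{\it PGL}]$, and the compact-to-Hausdorff continuous-bijection argument of Proposition~\ref{homeo} for the homeomorphism $\Phi$. The only substantive point you pass over is the verification of condition (ii) of Definition~\ref{KE.stack} --- that the universal family over the slice is genuinely a $\mathbb{Q}$-Gorenstein family of klt $\mathbb{Q}$-Fano varieties --- which the paper establishes by a short argument on normality, openness of the klt locus, and the identification $\mathcal{O}(1)\sim -mK$ over the slice.
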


\noindent
We remark that the above ``Gromov-Hausdorff" is in the refined sense, that is, 
with care of complex (algebraic) structures as defined and explained in \cite{DS}, \cite{SSY} etc. 


\section{Proof of the main theorems} 

\subsection{Affine \'{e}tale slice in the Hilbert scheme}\label{slice.sec}

We begin the proof of our Main theorem \ref{Main.thm}, 
which will be completed in the end of subsection \ref{local.GIT.sec}. 
In this subsection, we construct an affine slice around $[X]$ inside appropriate Hilbert scheme, 
where $X$ is the $\mathbb{Q}$-Fano variety in concern. In the next subsection, using that slice, 
we formulate and prove the local deformation picture of K\"ahler-Einstein metrics. 

We fix the dimension $n$ of the Fano varieties in concern, and 
consider a finite disjoint union of components of the Hilbert scheme, which we denote by 
${\it Hilb}$, which includes all smooth 
K\"ahler-Einstein Fano manifolds of dimension $n$ and its 
Gromov-Hausdorff limits. Such finite type ${\it Hilb}$ 
exists thanks to the recent breakthrough by 
Donaldson-Sun \cite{DS} and the ``classical'' boundedness result 
by Koll\'ar-Miyaoka-Mori \cite{KMM}. 
In \cite{DS}, it is even proved that 
we can assume that they are all $m$-pluri-anticanonically embedded inside 
$\mathbb{P}^{N}$ with some uniform exponent $m$ and $N=h^{0}(-mK_{X})-1$. 
We work in this setting so that our construction a priori depends on $m$ but we do not 
expect so (see the remark \ref{last.comments} which we put in our revision). 

We set 
${\it Hilb}^{KE}$ as those which parameterize all $m$-pluri-anticanonically 
embedded K\"ahler-Einstein $\mathbb{Q}$-Fano varieities. 
Obviously ${\it Hilb}^{KE}$ is an ${\it SL}(N+1)$-
invariant (equivalently, ${\it PGL}(N+1)$-invariant) subset of ${\it Hilb}$ but note that it does not have a scheme structure in general. 
In fact, as we will show in the next subsection \ref{local.GIT.sec} 
without using the results in this subsection, 
${\it Hilb}^{KE}$ is a \textit{constructible} subset in ${\it Hilb}$. 
So from now on, we replace ${\it Hilb}$ by the Zariski closure 
of ${\it Hilb}^{KE}$ so that we can assume that ${\it Hilb}^{KE}$ 
is dense inside ${\it Hilb}$. From now on, we work inside this 
replaced ${\it Hilb}$. 

Take any point $[X]\in {\it Hilb}^{KE}$. From \cite[III Theorem 4]{CDS}, 
an extension of the Matsushima's theorem \cite{Mat}, 
we know that the automorphism group ${\it Aut}(X)$ 
is a reductive algebraic group. 
Note that ${\it Aut}(X)$ is the isotropy (stabiliser) subgroup 
of the natural ${\it PGL}$-action on ${\it Hilb}$. Thus the 
isotropy subgroup of ${\it SL}$-action on ${\it Hilb}$, which we denote 
as ${\tilde {\it Aut}}(X)$, 
is a central extension of ${\it Aut}(X)$ by $\mu_{N+1}$, the finite 
group of $(N+1)$-th roots of unity isomorphic to $\mathbb{Z}/(N+1)\mathbb{Z}$ which 
acts trivially on ${\it Hilb}$. 
The reason why we think also ${\it SL}$-action not only ${\it PGL}$-action is 
sometimes it is needed to 
make the action available at the level of vector space $H^{0}(X,-mK_{X})$ 
i.e. cone over the projective 
space $\mathbb{P}^{N}$. 

Also let us recall that 
${\it Hilb}\subset \mathbb{P}_{*}(V)$ with some ${\it SL}$-representation $V$ from 
the construction of the Hilbert scheme by Grothendieck. 
(Here $\mathbb{P}_{*}$ denotes covariant projectivisation unlike Grothendieck's 
notation. ) Noting that $[X]$ corresponds to a ${\tilde{\it Aut}}(X)$-invariant 
one dimensional vector space $\mathbb{C}v\subset V$, 
we can decompose ${\tilde{\it Aut}}(X)$'s linear 
representation as $V=\mathbb{C}v\oplus V'$ where 
$V'$ is also ${\tilde {\it Aut}}(X)$-invariant. 
We owe Jarod Alper for the clarification about this and be grateful to him. 
This is possible since 
we know ${\tilde{\it Aut}}(X)$ is 
reductive. Then we can take an ${\it Aut}(X)$-invariant open subset $U_{[X]}$ 
as ${\it Hilb}\setminus \mathbb{P}_{*}(V')$. This is also affine since 
$\mathbb{P}_{*}(V')$ is an ample divisor of the original projective space 
$\mathbb{P}_{*}(V)$. 

Note that this open neighborhood $U_{[X]}$ of $[X]$ 
is only ${\it Aut}(X)$-invariant (or equivalently $\tilde{\it Aut}(X)$-invariant),  
but \textit{not} necessarily ${\it SL}$-invariant. 
In the meantime, this affine-ness of $U_{[X]}$  
enables us to apply the following techniques of taking 
\'etale slice mainly due to \cite{Luna} (a.k.a., Luna's ``\textit{\'etale slice theorem}'' cf. \cite[5.3]
{Dre}). 
We include the short outline of the proof for the readers' convenience, partially 
because we slightly extend original theorem of \cite{Luna}, 
but basically the argument below is from the nice 
exposition of \cite{Dre} on the Luna's theory \cite{Luna}. 
First we can easily construct a closed immersion of $U_{[X]}$ into an 
${\it Aut}(X)$-acted smooth affine space 
$\tilde{U}_{[X]}$ (cf. e.g, \cite[Lemma 5.2]{Dre}) with the same embedded dimension 
of $[X]\in U_{[X]}$. 
Then for the proof of \'etale slice theorem of \cite{Luna} (cf., e.g., \cite[Lemma 
5.1]{Dre}), 
it is proved that there is an ${\it Aut}(X)$-equivariant affine regular map 
$\varphi \colon \tilde{U}_{[X]}\rightarrow (T_{[X]}U_{[X]})$ which is 
\'etale at $[X]$. This is again depending on the reducitivity of ${\it Aut}(X)$. 
We use this equivariant map as follows. 

We decompose the ${\it Aut}(X)$-representation $T_{[X]}U_{[X]}$ as 
$T_{[X]}({\it SL}(N+1)[X]\cap U_{[X]})\oplus N$ with some ${\it Aut}(X)$-invariant 
subvector space $N$. Then we define $V_{[X]}:=(\varphi^{-1}N\cap U_{[X]})\subset 
U_{[X]}$, which is an ${\it Aut}(X)$-invariant 
locally closed affine subset of ${\it Hilb}$ including $[X]$. 
Then this $V_{[X]}\subset U_{[X]}$ is an \'{e}tale slice in the sense of 
\cite{Luna, Dre}, in particular $[V_{[X]}/{\it Aut}(X)]\to [U_{[X]}/{\it PGL}]$ 
is an \'etale morphism (between two quotients stacks). 
We omit more details and the rest of the proof of this known fact since it 
simply follows from the proof of \cite[Theorem 5.3]{Dre} or 
\cite[subsection 2.2]{AK}. 


\subsection{K-stability via CM line bundle}\label{Kst.def}

Before proceeding to next arguments, we briefly recall the fundamental relation of the 
K-stability and \textit{the CM line bundle} (\cite{FS, PT}), 
which we regard as a definition of the K-stability in this paper. 

The CM line bundle, in our setting, is a certain ${\it SL}$-equivariant line bundle $\lambda_{CM}$ on ${\it Hilb}$ (\cite{FS}, \cite{PT}, \cite{FR}). As the actual construction is a little complicated and 
we do not need in this paper, we omit its details and refer to \cite{FR}. 

In our setting, for the given positive integer parameter $m$, 
the $K_{(m)}$-stability of $\mathbb{Q}$-Fano varieties means 
the following (as in \cite{Od2}, just following \cite{Don0}). 

\begin{Def}
As in the previous subsection, 
suppose that a (klt) $\mathbb{Q}$-Fano variety $X$ satisfies that $-mK_{X}$ is a 
very ample line bundle ($m\in \mathbb{Z}_{>0}$). Then the $\mathbb{Q}$-Fano variety 
$X$ (more precisely, $(X,-K_{X})$) is said to be $K_{(m)}$-stable 
if for any nontrivial one parameter subgroup $f\colon \mathbb{C}^{*}\to {\it SL}$, 
minus the weight of $\lambda_{CM}|_{{\it lim}_{t\to 0}(f(t)[X])}$ (called as the Donaldson-Futaki invariant associated to $f$) is positive. The one parameter degeneration of $X$ along 
$\overline{f(\mathbb{C}^{*})\cdot [X]}\subset {\it Hilb}$ is called 
``\textit{test configuraiton}" by \cite{Don0}. )

Similarly, $X$ is said to be $K_{(m)}$-semistable (resp. $K_{(m)}$-polystable) if 
all the Donaldson-Futaki invariants are non-negative (resp. $X$ is semistable and 
the Donaldson-Futaki invariant of $f$ is positive if and only if the orbit closure 
$\overline{f(\mathbb{C}^{*})\cdot [X]}\subset {\it Hilb}$ is contained 
in the {\it SL}-orbit of $X$ (such a 
degeneration is called ``\textit{product} test configuration")). 

$X$ is said to be K-stable (resp. K-semistable, K-polystable) if it is $K_{(m)}$-stable 
 (resp. $K_{(m)}$-semistable, $K_{(m)}$-polystable) 
for all sufficiently divisible positive integer $m$. 
\end{Def}

\subsection{Local GIT polystability}\label{local.GIT.sec}

In this subsection, we apply \cite[Lemma 3.6]{OSS} to the ${\it Aut}(X)$-action on the Affine \'{e}tale slice $V_{[X]}$ and see that 

\begin{quotation}
the points 
corresponding to some 
K\"ahler-Einstein $\mathbb{Q}$-Fano varieties are GIT polystable in $V_{[X]}$ 
with respect to the ${\it Aut}(X)$-action, 
\end{quotation}
and we denote the polystable locus in $V_{[X]}$ as $V_{[X]}^{{\it ps}}$. 
The following theorem shows that the converse to \cite[Lemma 3.6]{OSS} also holds in 
appropriate sense, and later on this will be crucial for us. 

\begin{Thm}[Local deformation picture of KE Fano varieties]\label{local.GIT}
For small enough affine \'etale slice $V_{[X]}$, i.e. after shriking $V_{[X]}$ to 
${\it Aut}(X)$-invariant open affine neighborhood of $[X]$ if necessary, we have 
$$V_{[X]}^{ps}=V_{[X]}\cap {\it Hilb}^{KE}.$$ 

Recall that $V_{[X]}^{ps}$ denotes the GIT poly-stable locus of the affine slice 
$V_{[X]}$ in the Hilbert scheme, with 
respect to the ${\it Aut}(X)$-action. 
\end{Thm}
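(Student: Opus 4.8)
The plan is to bootstrap from the one-parameter results of \cite{LWX}, \cite{SSY} to the full local GIT picture by a combination of the Hilbert--Mumford criterion inside the slice and a properness/compactness argument coming from Gromov--Hausdorff theory. First I would set up the dictionary carefully: by the construction in subsection \ref{slice.sec}, the stack morphism $[V_{[X]}/{\it Aut}(X)]\to [U_{[X]}/{\it PGL}]\to [{\it Hilb}/{\it PGL}]$ is \'etale, so a one-parameter subgroup of ${\it Aut}(X)$ degenerating $[X]$ inside $V_{[X]}$ produces, up to the central $\mu_{N+1}$, a test configuration of $X$ in the sense of subsection \ref{Kst.def}, and conversely every equivariant one-parameter degeneration occurring near $[X]$ in ${\it Hilb}$ can (after shrinking) be represented inside the slice by Luna-type equivariance; moreover the weight of $\lambda_{CM}$ computing the Donaldson--Futaki invariant matches the GIT weight of the linearised ${\it Aut}(X)$-action on $V_{[X]}$ (this is exactly the content that makes $\lambda_{CM}$ ``the'' polarisation for local GIT, cf. \cite{FS}, \cite{PT}). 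This reduces the two inclusions to statements about which points are K-polystable near $[X]$.

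The inclusion $V_{[X]}\cap {\it Hilb}^{KE}\subseteq V_{[X]}^{ps}$ is the half already available: it is \cite[Lemma 3.6]{OSS}, which says KE $\Rightarrow$ local GIT polystability, re-expressed through the weight-matching above; so I would simply cite it. For the reverse inclusion $V_{[X]}^{ps}\subseteq {\it Hilb}^{KE}$, take $[Y]\in V_{[X]}^{ps}$. GIT polystability of the ${\it Aut}(X)$-orbit of $[Y]$ in the affine slice means, by Hilbert--Mumford applied to ${\it Aut}(X)$, that for every one-parameter subgroup the weight is $\geq 0$ with equality only for product degenerations, i.e. via the dictionary $Y$ is $K_{(m)}$-polystable \emph{with respect to test configurations that are equivariant for ${\it Aut}(X)$ and live in the slice}. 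To upgrade this to genuine $K$-polystability of $Y$ (hence, by the equivalence theorem \cite{CDS}, \cite{Tia2}, \cite{SSY} cited in section \ref{precise.Kmod}, KE, hence $[Y]\in {\it Hilb}^{KE}$), I would invoke \cite{LWX}, \cite{SSY}: the key point there is that for $\mathbb{Q}$-Gorenstein smoothable $\mathbb{Q}$-Fano varieties one only needs to test one-parameter degenerations, and by properness of the ${\it SL}(N+1)$-action modulo ${\it Aut}(X)$-orbits near the slice every such degeneration of $Y$ is ${\it SL}$-conjugate into the slice (this is where I would shrink $V_{[X]}$); combined with \cite[Theorem 1.3 or its singular analogue]{LWX}, \cite{SSY} on continuity of KE along one-parameter degenerations, GIT-polystability in the slice forces existence of a KE metric on $Y$. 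Equivalently, and perhaps more robustly, use the Gromov--Hausdorff compactness of \cite{DS}: the orbit of $[Y]$ is ${\it SL}$-equivalent to a limit of KE Fano's, its ${\it SL}$-orbit closure contains a unique closed orbit which by \cite[Lemma 3.6]{OSS} carries a KE metric and is, by GIT-polystability inside the slice, ${\it SL}$-conjugate to $[Y]$ itself.

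Concretely the steps in order are: (1) record the weight-matching between $\lambda_{CM}$ and the ${\it Aut}(X)$-linearisation on $V_{[X]}$, so that local GIT (semi/poly)stability $\Leftrightarrow$ $K_{(m)}$-(semi/poly)stability against slice-internal test configurations; (2) cite \cite[Lemma 3.6]{OSS} for ``$\supseteq$'', i.e. KE points are GIT-polystable, giving $V_{[X]}\cap{\it Hilb}^{KE}\subseteq V_{[X]}^{ps}$; (3) for ``$\subseteq$'', show after shrinking that every one-parameter degeneration of a point $[Y]$ near $[X]$ is ${\it SL}$-conjugate into the slice, so that slice-internal $K_{(m)}$-polystability of $Y$ is equivalent to genuine $K_{(m)}$-polystability (hence, taking $m$ sufficiently divisible as in \cite{DS}, to $K$-polystability); (4) apply the equivalence theorem (\cite{CDS}, \cite{Tia2}, \cite{SSY}) to conclude $Y$ is KE, i.e. $[Y]\in{\it Hilb}^{KE}$; and (5) for the step-(3) ``equality only for product'' clause, match product test configurations with ${\it Aut}(X)$-orbit directions in $T_{[X]}U_{[X]}$, which is precisely why the summand $T_{[X]}({\it SL}(N+1)[X]\cap U_{[X]})$ was split off in the construction of $V_{[X]}$.

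I expect the main obstacle to be step (3): controlling how far one must shrink $V_{[X]}$ so that \emph{every} destabilising (or strictly-semistable, product) one-parameter degeneration of a nearby point is accounted for \emph{inside} the affine slice, rather than only those one-parameter subgroups lying in ${\it Aut}(X)$. The honest content here is a semicontinuity/properness statement: the locus of $K_{(m)}$-semistable points is open (this is where the ``${\it Hilb}^{KE}$ is constructible, replace ${\it Hilb}$ by the closure of ${\it Hilb}^{KE}$'' normalisation from subsection \ref{slice.sec} is used), KE existence is closed in the GH sense \cite{DS}, and \cite{LWX}, \cite{SSY} supply that a one-parameter degeneration of a point whose whole orbit closure stays near $[X]$ is again near $[X]$ and hence meets the slice. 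Phrased through the \'etale slice, it amounts to: the categorical quotient $V_{[X]}/\!/{\it Aut}(X)$ \'etale-locally models ${\it Hilb}/\!/{\it PGL}$ near $[X]$, so closed orbits correspond to closed orbits, and \cite[Lemma 3.6]{OSS} identifies the closed ones in ${\it Hilb}$ with KE varieties; I would finish by noting that this is exactly the ``good moduli space'' property in the sense of Alper \cite{Alp} that is asserted in Theorem \ref{Main.thm}, so the shrinking can be done once and for all.
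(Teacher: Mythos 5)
Your inclusion $V_{[X]}\cap {\it Hilb}^{KE}\subseteq V_{[X]}^{ps}$ via \cite[Lemma 3.6]{OSS} matches the paper, but your route for the converse has a genuine gap at your step (3), which you yourself flag as ``the main obstacle'' and then do not close. The Luna slice gives that $[V_{[X]}/{\it Aut}(X)]\to[{\it Hilb}/{\it PGL}]$ is \'etale onto a \emph{saturated neighbourhood}, so closed ${\it Aut}(X)$-orbits in $V_{[X]}$ correspond to ${\it PGL}$-orbits that are closed \emph{in} ${\it PGL}\cdot V_{[X]}$ --- not to K-polystability, which tests one-parameter degenerations whose central fibres may leave any such neighbourhood (and may require larger exponents $m$). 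No amount of shrinking $V_{[X]}$ makes a destabilising test configuration of a nearby $[Y]$ ``${\it SL}$-conjugate into the slice''; that claim is essentially false as stated. Moreover, the fixes you propose lean on openness of K-semistability, which the paper explicitly isolates as an \emph{unproven hypothesis} (Remark \ref{last.comments}(i)), and on a weight-matching between $\lambda_{CM}$ and the affine ${\it Aut}(X)$-linearisation on $V_{[X]}$ that is asserted, not established (and is not used in the paper: polystability on the affine slice just means closedness of the ${\it Aut}(X)$-orbit).

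The paper's actual argument is structured to avoid exactly this issue: it is a proof by contradiction. One first shows both $V_{[X]}^{ps}$ and $V_{[X]}\cap{\it Hilb}^{KE}$ are constructible (the latter via the uniform bound on the exponent $m$ for testing K-polystability, from \cite{Od2}, \cite{CDS}, \cite{SSY}), so if equality fails for every shrinking there is a sequence $P_i\to[X]$ of polystable-but-not-KE points. One approximates each $P_i$ by smooth KE points $P_{i,j}$, takes Donaldson--Sun Gromov--Hausdorff limits $Q_i$ (reached by elements $\phi_{i,j}\in{\it SL}$ that may go to infinity --- hence the equivariant compactification $\bar{\it SL}$ and the resolution $T$), and then applies the separatedness theorem (Theorem \ref{sep}, from \cite{LWX}+\cite{SSY}+\cite{CDS}) to the two K-polystable fillings $Q$ and $[X]$ of the same family to conclude $Q\in{\it SL}\cdot[X]$; the contradiction then comes from uniqueness of the closed orbit in a fibre of the GIT quotient. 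Your closing alternative sketch (``the ${\it SL}$-orbit closure contains a unique closed orbit which carries a KE metric'') is in the right spirit, but it presupposes both that such a KE closed orbit exists in the orbit closure of $[Y]$ (this is what the GH-limit construction supplies) and that it stays within the saturated neighbourhood (this is what the separatedness theorem, not the slice theorem, forces). Without Theorem \ref{sep} as the pivot, the argument does not go through.
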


It roughly says that, \'etale locally, the existence of K\"ahler-Einstein metrics on 
$\mathbb{Q}$-Fano varieties is equivalent to the classical GIT polystability, 
at least in the $\mathbb{Q}$-Gorenstein smoothable case (we expect this is the case 
in non-smoothable case as well). Note that the above statement is about ``local'' 
deformation picture in the sense we need to shrink $V_{[X]}$ in general. Otherwise 
the statement is false and indeed the proof requires that shrinking. 

This refines \cite[section 7]{Tia}, \cite[subsection 5.3]{Don1} which 
treated Mukai-Umemura (Fano) $3$-folds, $\mathbb{Q}$-Fano varieties case of 
\cite{Sze} 
and of course \cite[Lemma 3.6]{OSS}. We expect that this will be a 
fundamental tool in the further 
study of K\"ahler-Einstein metrics on $\mathbb{Q}$-Fano 
varieties in future. 

\begin{proof}[proof of Theorem \ref{local.GIT}]

The one side that
$V_{[X]}\cap {\it Hilb}^{KE}\subset V_{[X]}^{ps}$ 
is exactly (a special case of) \cite[Lemma 3.6]{OSS} and here is the argument for 
the 
other side i.e. $$V_{[X]}^{ps}\subset V_{[X]}\cap {\it Hilb}^{KE}.$$ 
We prove that this holds, once we replace $V_{[X]}$ with small enough affine 
${\it Aut}(X)$-invariant slice if necessary.

Note that the difference set $V_{[X]}^{ps}\setminus (V_{[X]}\cap {\it Hilb}^{KE})$ 
is constructible since both 
$V_{[X]}^{ps}$ and $(V_{[X]}\cap {\it Hilb}^{KE})$ are constructible subsets. 
The constructibility of polystable locus is a 
standard fact in the Geometric Invariant 
Theory. 
We now explain how to show the constructibility of 
$(V_{[X]}\cap {\it Hilb}^{KE})\subset V_{[X]}$. 
Indeed, due to \cite[Theorem 1]{SSY}, we know the equivalence of K-polystability and 
existence of K\"ahler-Einstein metrics for $\mathbb{Q}$-Gorenstein 
smoothable Fano varieties in general. Moreover, combining 
\cite[esp. II Theorem1, III Theorem 2]{CDS}, \cite[4.2.2]{SSY} and 
the arguments of \cite[esp. (2.4-8)]{Od2}, 
we know that it is also equivalent to the quantised ``$K_{(m)}$-polystability'' 
in the above sense of subsection \ref{Kst.def} for sufficiently divisible uniform $m\gg 0$ i.e. 
we can bound the exponent $m$ for testing K-(poly)stability. 
For the readers' convenience, we recall from \cite[esp. (2.4-8)]{Od2} 
that the main point of the 
uniform bound $m$ was the uniform positive lower bounds of (small) 
\textit{angles} of conical K\"ahler-Einstein metrics on all the $\mathbb{Q}$-Fano 
varieties parametrised in ${\it Hilb}$. 

Then the proof of 
the constructibility of the $K_{(m)}$-polystable locus inside ${\it Hilb}$ 
follows from the arguments in \cite[esp. (2.10-12)]{Od2} 
only with the additional but simple concern 
whether the test configurations are of product type or not. 

To prove the theorem, we suppose the contrary and get contradiction. 
So let us suppose that \textit{for any small enough} 
affine ${\it Aut}(X)$-invariant slice 
$V_{[X]}$ of $[X]$, we have $V_{[X]}^{ps}\neq V_{[X]}\cap {\it Hilb}^{KE}$.

Therefore, from our assumption that
$V_{[X]}^{ps}\neq V_{[X]}\cap {\it Hilb}^{KE}$, 
we have an irreducible locally closed subvariety $W$ inside 
the difference subset $V_{[X]}^{ps}\setminus (V_{[X]}\cap {\it Hilb}^{KE})$ 
whose closure meets $[X]$ and we take a sequence $P_{i}$ in $W$ converging to 
$[X]$. Otherwise, we can shrink $V_{[X]}$ to make it satisfies 
$V_{[X]}^{ps}= V_{[X]}\cap {\it Hilb}^{KE}$. 
Now we fixed our slice $V_{[X]}$. 

We take any ${\it SL}$-equivariant compactification of the algebraic group 
${\it SL}$ (such as \cite{DP}, or apply \cite{Sum}) 
and denote it with $\bar{\it SL}$ and 
consider the rational map 
$\varphi\colon\overline{V_{[X]}}\times \bar{{\it SL}}\dashrightarrow 
{\it Hilb}$ induced by the ${\it SL}$-action. Here 
$\overline{V_{[X]}}$ denotes the Zariski closure of $V_{[X]}$ inside 
${\it Hilb}$. 
Then we take a 
${\it SL}$-equivariant resolution of indeterminancy of $\varphi$ 
as 
$$\tilde{\varphi}\colon T\rightarrow {\it Hilb}.$$ 

\noindent
So $T$ is a certain ${\it SL}$-equivariant blow up of 
$\overline{V_{[X]}}\times \bar{{\it SL}}$ along some ideal co-supported 
on $\overline{V_{[X]}}\times (\bar{SL}\setminus {\it SL})$. 
Via the morphism from $T$ to ${\it Hilb}$, 
we can regard $T$ as a parameter space of 
Fano varieties and its degenerations.

Then take sequences $P_{i}\in W\subset \overline{V_{[X]}}\simeq \overline{V_{[X]}}
\times \{e\}\subset T
(i=1,2,\cdots)$ 
which converges to $[X]\in V_{[X]}$ and 
$P_{i,j} \in V_{[X]}\simeq V_{[X]}\times \{e\}\subset T ({i,j=1,2,\cdots})$, parametrising smooth K\"ahler-Einstein Fano manifolds $X_{i,j}$, which converges to 
$P_{i}$ when $j$ goes to infinity. 

Thanks to \cite{DS}, we know that (by taking subsequence) the Gromov-Hausdorff limit of 
$X_{i,j}$ with K\"ahler-Einstein metrics exists as another K\"ahler-Einstein 
$\mathbb{Q}$-Fano variety $Y_{i}$. Furthermore, from their construction as a 
limit inside the Hilbert scheme (cf., \cite[Theorem 1.2]{DS}), we know that 
there is a sequence of elements of ${\it SL}$ which we denote by 
$\phi_{i,j}$ such that ${\it lim}_{j\to \infty}\phi_{i,j}(P_{i,j})$ 
represents a point $Q_{i}$ which parametrises the 
($m$-th pluri-anticanonically embedded) K\"ahler-Einstein Fano variety $Y_{i}$, 
for each fixed $i$. By the standard diagonal argument, it also follows from 
\cite{DS} that ${\it lim}^{GH}_{i\to \infty}Y_{i}$ exists (limit in the (refined) 
Gromov-Hausdorff 
sense as in \cite{DS}) as yet another K\"ahler-Einstein $\mathbb{Q}$-Fano variety 
$Y$ where the corresponding point will be denoted by $Q\in {\it Hilb}$. 
As the blow up morphism $T\rightarrow \overline{V_{[X]}}\times \bar{{\it SL}}$ 
is (topologically) a proper morphism, we can take all these points in $T$. 

Our general idea is to apply (recently obtained) 
separated-ness theorem to the two ``degenerations'' 
of $X_{i,j}$ to $[X]$ and $[Y]=Q\in T$, both of which parametrise 
K\"ahler-Einstein $\mathbb{Q}$-Fano varieties. 
To put precision on the idea, from now on, 
we proceed to some more algebro-geometric arguments. 






Set $T^{o}$ as the (open dense) subset of $T$ which is the 
preimage of ${\it SL}\subset \bar{\it SL}$. We also set 
$\partial T:=T\setminus T^{o}$. 
Consider some general affine curve $C\subset T$ which passes through $Q$ 
and intersects $\partial T \cup (V_{[X]}^{ps}\setminus (V_{[X]}\cap {\it Hilb}^{KE}))$ only at the point $\{Q\}$. 

On the other hand, take the natural retraction 
$r\colon T^{o}\rightarrow \overline{V_{[X]}}$ 
induced by ${\it SL}\rightarrow \{e\}$ where $e\in {\it SL}$ is the unit of 
special linear group ${\it SL}$ and partially complete  
$C'^{0}:=r(C\setminus\{Q\})$ naturally to $C'$ with $i\colon C\simeq C'$. 
Note that from the construction, $r$ also naturally extends to a morphism 
$$
\tilde{r}\colon T\rightarrow {\it Hilb}
$$
from the whole $T$. 
Then from our construction, 
the image $i(Q)$ is nothing but 
the original $[X]\in {\it Hilb}$. We can see it as follows. 
Since $i$ should preserve the image of $\tilde{r}$, 
$\tilde{r}(i(Q))=\tilde{r}(Q)$ and that 
$\tilde{r}(Q)=\tilde{r}({\it lim}_{i\to \infty}(Q_{i}))
=\tilde{r}({\it lim}_{i\to \infty}({\it lim}_{j\to \infty}(P_{i,j}))
={\it lim}_{i\to \infty}(\tilde{r}(P_{i}))=[X]$. The last equality follows from 
our construction of $P_{i}$. 
(Here all the limit symbols are in the usual sense of analytic topology). 

The crucial result we need from now on is the following. 
Although we do not have any contribution on it 
in this paper, we would like to recall the 
result as we need a comment (on how to combine \cite{LWX},\cite{SSY},\cite{CDS}, 
as written below) on the proof to make things rigorous. 
I thank S.Sun for the mathematical clarification of this point.

\begin{Thm}[{\cite[Thm1.1 of v1]{LWX}+\cite[Thm1.1]{SSY},\cite{CDS}}]\label{sep}
Let $\mathcal{X}$ and $\mathcal{Y}$ be two
$\mathbb{Q}$-Gorenstein flat deformations of K\"ahler-Einstein 
$\mathbb{Q}$-Fano varieties 
over a smooth curve $C\ni 0$. Suppose $\mathcal{X}_t\ \cong\
\mathcal{Y}_t
$ for $t\neq 0$ 
and further that these are all smooth (i.e. generically smooth).  If $\mathcal{X}_0$ and $\mathcal{Y}_0$ are both 
K-polystable, then they are isomorphic $\mathbb{Q}$-Fano varieties.
\end{Thm}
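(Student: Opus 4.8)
The plan is to establish Theorem~\ref{sep} by reduction to the three cited sources rather than by a fresh argument, since the separatedness is already the content of \cite[Theorem~1.1 of v1]{LWX} once its analytic inputs are supplied in the present (possibly singular, smoothable) generality. First I would localize: replace $C$ by a small disc (or by the spectrum of a DVR) around $0$, and after a finite base change --- harmless for the isomorphism class of the central fibres of a $\mathbb{Q}$-Gorenstein family --- arrange that the given isomorphism $\mathcal{X}_t\cong\mathcal{Y}_t$, $t\neq0$, comes from an isomorphism of the two families over the punctured disc. The first substantive step is then to apply \cite[Theorem~1.1]{SSY} together with \cite{CDS}: because the generic fibres are \emph{smooth}, both $\mathcal{X}_0$ and $\mathcal{Y}_0$ are $\mathbb{Q}$-Gorenstein smoothable $\mathbb{Q}$-Fano varieties, so their K-polystability is equivalent to the existence of a (singular) K\"ahler-Einstein metric; in particular ${\it Aut}(\mathcal{X}_0)$ and ${\it Aut}(\mathcal{Y}_0)$ are reductive by the extension of Matsushima's theorem in \cite[III Theorem~4]{CDS}. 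This is exactly the place where the generic smoothness hypothesis enters --- it is what puts $\mathcal{X}_0,\mathcal{Y}_0$ under the scope of \cite{SSY}.

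With both central fibres now known to be K\"ahler-Einstein $\mathbb{Q}$-Fano varieties that are smoothable, I would realize the two families inside the Hilbert scheme ${\it Hilb}$ of $m$-pluri-anticanonically embedded varieties as in Subsection~\ref{slice.sec}, obtaining arcs $t\mapsto[\mathcal{X}_t]$ and $t\mapsto[\mathcal{Y}_t]$ that lie in a common ${\it SL}$-orbit for $t\neq0$ and have limits $[\mathcal{X}_0]$ and $[\mathcal{Y}_0]$. The goal is to identify $[\mathcal{X}_0]$ and $[\mathcal{Y}_0]$ as ${\it SL}$-equivalent closed-orbit (``polystable'') degenerations of one and the same smooth generic fibre; this is precisely the assertion of \cite[Theorem~1.1 of v1]{LWX}, whose proof compares Gromov-Hausdorff limits along the degeneration via Donaldson's continuity method and the partial $C^0$-estimate. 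Since the generic fibre need not itself be K\"ahler-Einstein, one runs this with \emph{conical} K\"ahler-Einstein metrics of small cone angle; the point that requires care --- the ``comment on how to combine \cite{LWX}, \cite{SSY}, \cite{CDS}'' --- is that the convergence of these conical metrics and the uniqueness of their Gromov-Hausdorff limits when the central fibre is a genuinely singular $\mathbb{Q}$-Fano variety are exactly what \cite{SSY} (together with the smooth case of \cite{CDS}) provides, so that the argument of \cite{LWX} applies verbatim here.

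I expect the main obstacle to be this bookkeeping at the interface of the three references rather than any new mathematics: one must verify that the hypotheses under which \cite{LWX} proves separatedness are met --- smooth isomorphic generic fibres, K-polystable central fibres known to carry K\"ahler-Einstein metrics with reductive automorphism groups --- and that every appeal in \cite{LWX} to analytic convergence or to uniqueness of Gromov-Hausdorff limits is covered, in the possibly-singular central-fibre case, by \cite{SSY}. Granting this, $\mathcal{X}_0$ and $\mathcal{Y}_0$ are isomorphic as $\mathbb{Q}$-Fano varieties, which is the statement of Theorem~\ref{sep}.
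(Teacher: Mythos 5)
Your proposal is correct and follows essentially the same route as the paper: use \cite[Theorem 1.1]{SSY} to upgrade K-polystability of the ($\mathbb{Q}$-Gorenstein smoothable) central fibres to existence of K\"ahler-Einstein metrics, deduce reductivity of their automorphism groups from \cite[III, Theorem 4]{CDS}, and thereby verify the hypothesis under which the separatedness statement of \cite[v1]{LWX} applies. The paper's own proof is exactly this bookkeeping at the interface of the three references, with no additional argument.
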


This follows from the combination of \cite[v1]{LWX} and \cite[Theorem 1.1]{SSY}. 
Note that for \emph{separateness}, \cite[Corollary 1.2]{SSY} needs to assume that $\mathcal{X}_0$ and $\mathcal{Y}_0$ have discrete automorphism groups, while \cite[Remark 6.11]{LWX} needs to assume that $\mathcal{X}_0$ and $\mathcal{Y}_0$ have reductive automorphism groups.  But from \cite[Theorem 1.1]{SSY} we know both $\mathcal{X}_0$ and $\mathcal{Y}_0$ admit KE metrics, so satisfy the assumption on \emph{reductivity} of \cite[v1]{LWX} by \cite[III, Theorem 4]{CDS}. 
(The author had once attempted to prove this \emph{separateness} with 
Professor Richard Thomas but the arguments had a 
technical gap. ) 

We apply the theorem above to the two families of $\mathbb{Q}$-Fano varieties 
corresponding to $C\subset T$ and $C'\subset T$. Then we can show that 
$Q$ is in the ${\it SL}$-orbit of $[X]\in {\it Hilb}$, hence in $T^{o}$ in 
particular. Recall that $Q$ was defined as the limit of $Q_{i}$. 
Hence for $i\gg 0$, $Q_{i}$ is also in $T^{o}$. 
Then it implies that by \cite[Lemma 3.6]{OSS}, 
$i(Q_{i})\in V_{[X]}$, which is well-defined, is GIT polystable 
with respect to the action of the automorphism group 
${\it Aut}(X)$. 

Then we get a contradiction from the general theory of Geometric Invariant 
Theory \cite{GIT} since $i(Q_{i})$ and $P_{i}$ are both 
GIT polystable, while being the limits of sequences which parametrises the 
same polystable point. This completes the proof. 
\end{proof}

\begin{Prop}\label{homeo}
Let $X$ be an arbitrary 
$\mathbb{Q}$-Gorenstein smoothable K\"ahler-Einstein $\mathbb{Q}$-Fano 
variety and denote the corresponding point in the Hilbert scheme 
as $[X]$ which represents 
$m$-pluri-anticanonically embedding $[X]\in {\it Hilb}^{KE}$. 
Then there is a small enough affine ${\it Aut}(X)$-invariant slice $V_{[X]}$ 
of the natural ${\it PGL}$-action on ${\it Hilb}$ such that 
an open neighborhood (in analytic topology) of $\bar{[X]}$ in 
the GIT (categorical) quotient $V_{[X]}//{\it Aut}(X)$ naturally maps 
homeomorphically to $\bar{M}^{GH}$ 
(which eventually becomes an 
\'etale algebraic morphism with the algebraic structure on the latter). 

Analytically speaking, this is equivalent to saying 
that there is an open subset $W$ of $[X]$ 
in $V_{[X]}$ and an analytically open neighborhood $N$ of $[X]\in \bar{M}^{GH}$ 
such that there is a natural homeomorphism 
$$N\rightarrow (W\cap V_{[X]}^{ps})/{\it Aut}(X),$$
preserving the $\mathbb{Q}$-Fano varieties being parametrised. 
\end{Prop}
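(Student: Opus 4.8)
The plan is to bootstrap Theorem \ref{local.GIT} into a statement about the categorical quotient and its relation to the Gromov-Hausdorff moduli. First I would fix $[X]\in {\it Hilb}^{KE}$ and invoke Theorem \ref{local.GIT} to obtain a small enough ${\it Aut}(X)$-invariant affine \'etale slice $V_{[X]}$ with $V_{[X]}^{ps}=V_{[X]}\cap {\it Hilb}^{KE}$. Since $V_{[X]}$ is affine and ${\it Aut}(X)$ is reductive (by the extension of Matsushima's theorem \cite[III Theorem 4]{CDS}), the GIT categorical quotient $\pi\colon V_{[X]}\to V_{[X]}/\!/{\it Aut}(X)$ exists as an affine scheme, the image of the polystable locus $\pi(V_{[X]}^{ps})$ is the set-theoretic quotient (closed orbits map bijectively to points, by the standard GIT picture \cite{GIT}), and shrinking to an ${\it Aut}(X)$-invariant open affine $W\ni [X]$ we get that $(W\cap V_{[X]}^{ps})/{\it Aut}(X)$ sits inside $V_{[X]}/\!/{\it Aut}(X)$ as the image of $\pi|_W$. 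By the \'etale slice property recorded in subsection \ref{slice.sec}, the induced morphism $[V_{[X]}/{\it Aut}(X)]\to [{\it Hilb}/{\it PGL}]$ is \'etale, so $V_{[X]}/\!/{\it Aut}(X)$ is an \'etale-local chart of the would-be moduli space around $\overline{[X]}$, and the points of $(W\cap V_{[X]}^{ps})/{\it Aut}(X)$ biject with isomorphism classes of the K\"ahler-Einstein $\mathbb{Q}$-Fano varieties parametrised nearby.

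Next I would construct the map $N\to (W\cap V_{[X]}^{ps})/{\it Aut}(X)$, where $N$ is a suitable analytically open neighborhood of $[X]$ in $\bar{M}^{GH}$. Any point of $\bar{M}^{GH}$ sufficiently close to $[X]$ is, by definition of the refined Gromov-Hausdorff topology of \cite{DS} (and as used in subsection \ref{local.GIT.sec}), represented by an $m$-pluri-anticanonically embedded K\"ahler-Einstein $\mathbb{Q}$-Fano variety $Y$, i.e. a point of ${\it Hilb}^{KE}$ lying in the ${\it SL}$-orbit-closure neighborhood of $[X]$; using the ${\it SL}$-action one moves it into $V_{[X]}\cap {\it Hilb}^{KE}=V_{[X]}^{ps}$, and then into $W$ after shrinking, and finally projects by $\pi$. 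The key points to verify are that this is well-defined (independent of the choice of ${\it SL}$-element, which follows because two such representatives lie in the same ${\it Aut}(X)$-orbit inside the slice — a consequence of the \'etale slice theorem combined with uniqueness of the closed orbit in a fibre of $\pi$) and continuous (the Donaldson-Sun embedding \cite[Theorem 1.2]{DS} gives continuity of passing from the metric Gromov-Hausdorff topology to the Hilbert scheme topology near $[X]$, possibly after the same diagonal-subsequence argument as in the proof of Theorem \ref{local.GIT}). The inverse map sends a point of $(W\cap V_{[X]}^{ps})/{\it Aut}(X)$ to the isomorphism class of the K\"ahler-Einstein $\mathbb{Q}$-Fano variety it parametrises — well-defined since closed ${\it Aut}(X)$-orbits correspond to isomorphism classes of these varieties, and continuous again by \cite{DS}.

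Finally I would check that these two maps are mutually inverse, which reduces to the set-theoretic bijection: each point near $[X]$ in $\bar{M}^{GH}$ gives a unique isomorphism class of K\"ahler-Einstein $\mathbb{Q}$-Fano variety, each closed ${\it Aut}(X)$-orbit in $W\cap V_{[X]}^{ps}$ gives a unique such isomorphism class, and these assignments are inverse by Theorem \ref{local.GIT} together with the fact that ${\it Hilb}^{KE}$ locally captures all nearby Gromov-Hausdorff limits (which is exactly how ${\it Hilb}$ was set up in subsection \ref{slice.sec}). A homeomorphism between Hausdorff spaces follows once we know both maps are continuous bijections and one side is, say, compact — or more safely, by checking continuity of both directly as above. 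The remark in parentheses about this becoming an \'etale algebraic morphism is then immediate from the \'etale slice property once the algebraic structure on $\bar{M}^{GH}$ is in place (which is the content of the global gluing in Theorem \ref{Main.thm}).

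I expect the main obstacle to be the \textbf{well-definedness and continuity of the map $N\to (W\cap V_{[X]}^{ps})/{\it Aut}(X)$}: one must ensure that the ${\it SL}$-element used to push a nearby Gromov-Hausdorff point into the slice can be chosen to depend continuously (after passing to the quotient, where the ${\it SL}$-ambiguity collapses) on the point, and that the various diagonal-sequence constructions from \cite{DS} patch together to give an honest continuous map rather than just a set-theoretic correspondence. This is where the refined Gromov-Hausdorff topology of \cite{DS} and the Donaldson-Sun embedding theorem do the essential work, and it is the step most analogous in difficulty to the limiting arguments in the proof of Theorem \ref{local.GIT}.
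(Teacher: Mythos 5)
Your outline follows the paper's route in its essentials: Donaldson--Sun \cite[(proof of) Theorem 1.2]{DS} supplies the continuous map from the Gromov--Hausdorff side into the Hilbert-scheme quotient, the \'etale slice and Theorem \ref{local.GIT} identify the local GIT quotient $(W\cap V_{[X]}^{ps})/{\it Aut}(X)$ with the relevant chart, and the set-theoretic bijection comes from matching closed orbits with isomorphism classes of KE $\mathbb{Q}$-Fano varieties. The one place where you should not trust your own hedge is the claim that the inverse is ``continuous again by \cite{DS}'': what \cite{DS} gives is that Gromov--Hausdorff convergence of KE Fano varieties forces convergence of the corresponding points in ${\it Hilb}$ (after moving by ${\it SL}$), not the converse. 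Continuity of the inverse would assert that convergence of polystable points in the slice forces Gromov--Hausdorff convergence of the KE metrics, which is a genuinely different (and nontrivial) statement about continuity of the metrics in algebraic families; it is not available off the shelf. The paper sidesteps this entirely by the topological argument you mention only as a fallback: $\bar{M}^{GH}$ is compact by Gromov compactness, the target quotient is Hausdorff precisely because of the separatedness Theorem \ref{sep} (via \cite{LWX} and \cite{SSY}), and a continuous bijection from a compact space to a Hausdorff space is automatically a homeomorphism. So you should commit to that route, and note that this is exactly where Theorem \ref{sep} is consumed in the proof --- your proposal never explicitly invokes it, which is the only substantive omission.
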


\begin{proof}[proof of Proposition \ref{homeo}] 
The continuity from $N$ to $(W\cap V_{[X]}^{ps})/{\it SL}$ follows from 
Donaldson-Sun \cite[(proof of) Theorem 1.2]{DS}. The quotient space 
${\it Hilb}^{KE}/{\it SL}$ satisfies the Hausdorff axiom due to the 
separated-ness theorem \ref{sep} proved by (\cite{LWX}+\cite{SSY}) while 
$\bar M^{GH}$ is compact due to the Gromov compactness theorem. It is a general 
theorem that continuous bijection from a compact topological space 
(now $\bar M^{GH}$) to 
a Hausdorff space (now ${\it Hilb}^{KE}/{\it SL}$) is automatically 
homeomorphism. 
\end{proof}

Summarising the above discussions, we conclude the proof of 
our main theorem \ref{Main.thm}, the moduli construction, as follows. 

\begin{proof}[proof of Theorem \ref{Main.thm}]
For each $[X_{i}]\in {\it Hilb}^{KE}$, i.e. $X_{i}$ is 
one of smooth K\"ahler-Einstein Fano $n$-dimensional manifolds or 
one of their Gromov-Hausdorff limits (hence $\mathbb{Q}$-Fano varieties 
with K\"ahler-Einstein metrics by \cite{DS}), 
let us consider $V_{[X_{i}]}$ constructed in 
the subsection \ref{slice.sec}. We replace $V_{[X_{i}]}$ 
by its open ${\it Aut}(X_{i})$-invariant 
open neighborhood, if necessary, 
to make it satisfy the requirement in Theorem \ref{local.GIT}. 

Note that for each $X_{i}$, ${\it PGL}\cdot V_{[X_{i}]}$ is a Zariski open subset 
in ${\it Hilb}$. It follows from the fact that since we constructed 
$V_{[X_{i}]}\subset U_{[X_{i}]}$ as an \'etale 
slice, ${\it PGL}\times_{{\it Aut}(X_{i})}V_{[X_{i}]}\to {\it Hilb}$ 
is an \'etale morphism so in particular an open morphism. 
Thus by quasi-compactness of 
${\it Hilb}$, we only need finitely many $i$ such sets ${\it PGL}\cdot V_{[X_{i}]}$ to 
cover ${\it Hilb}^{KE}$.

We note that $\varphi_{i}\colon [V_{[X_{i}]}/{\it Aut}(X_{i})]\rightarrow [{\it Hilb}/{\it PGL}]$ 
is an \'etale morphism between two quotient stacks, since again the morphism 
${\it PGL}\times _{{\it Aut}(X_{i})}V_{[X_{i}]}\to U_{[X_{i}]}\subset {\it Hilb}$ 
is strongly \'etale (in the sense of \cite[subsection 1.1]{Dre}). 
Please note that it is a priori \textit{not}  
necessarily open immersion (of algebraic stacks) because the slice 
$V_{[X_{i}]}$ is just an \textit{\'etale} slice. 
Glueing together $[V_{[X_{i}]}/{\it Aut}(X_{i})]$ via $\varphi_{i}$s 
which is by definition possible 
inside $[{\it Hilb}/{\it PGL}]$, 
we obtain $[W/{\it PGL}]$ with $W=\cup_{i} ({\it PGL}\cdot V_{[X_{i}]})\subset 
{\it Hilb}$, 
a moduli Artin stack which we denote as $\bar{\mathcal{M}}$. 
Furthermore, as the property \cite[subsection 1.1 (ii)]{Dre} 
of the \'etale slice $V_{[X_{i}]}$ (cf., also \cite[5.3]{Dre}) shows, 
categorical quotients $V_{[X_{i}]}/{\it Aut}(X_{i})$ glue together to form 
a coarse moduli algebraic space $\bar{M}$ of the Artin stack $\bar{\mathcal{M}}$. 

The fact that it is a KE moduli stack in the sense of Definition \ref{KE.stack}
(\cite{OSS}) now follows from Theorem \ref{local.GIT}. Indeed the condition $(iii)$ 
of Definition \ref{KE.stack} is exactly the statement of Theorem \ref{local.GIT} 
and we have proved the condition $(i)$ of Definition \ref{KE.stack} above. The 
remaining $(ii)$ of (\ref{KE.stack}), which says that the 
flat family on $V_{[X_{i}]}$ is $\mathbb{Q}$-Gorenstein flat family (once we 
shrink $V_{[X_{i}]}$ enough), can be easily checked as follows. 
(Please also see \cite[(2.4)]{OSS} for essentially the same arguments. )
Actually in general if we have a point $[X]$ in ${\it Hilb}$ 
corresponding to some normal variety $X$, its deformation parametrised in a 
neighborhood in ${\it Hilb}$ is automatically $\mathbb{Q}$-Gorenstein deformation. 
We set the locus of ${\it Hilb}$ which parametrises normal varieties as 
${\it Hilb}_{\text{normal}}\subset {\it Hilb}$, that is automatically open subset as
it is well known. We denote its subset which parametrises singular (but normal) 
varieties as ${\it Hilb}_{\text{normal.singular}}$. 
Let us take a log resolution of singularities of the pair 
$({\it Hilb}_{\text{normal}},{\it Hilb}_{\text{normal.singular}})$ after Hironaka, 
as 
$f\colon S\to {\it Hilb}$ so that $f^{-1}({\it Hilb}_{\text{normal.singular}})$ is 
a (simple normal crossing) Cartier divisor $\Sigma$ of $S$. Then 
we have a flat projective family 
$\pi\colon (\mathcal{X},\mathcal{O}_{\mathcal{X}}(1))\rightarrow S$ and 
\begin{equation}\label{lin.eq}
\mathcal{O}_{\mathcal{X}}(1)|_{\mathcal{X}\setminus \pi^{-1}(\Sigma)}
\sim_{(S\setminus \Sigma)} \mathcal{O}_{(\mathcal{X}\setminus \pi^{-1}(\Sigma))}(-mK_{\mathcal{X}\setminus \pi^{-1}(\Sigma)}). 
\end{equation}
The above (\ref{lin.eq}) 
implies that there are Weil divisors $\mathcal{D}, \mathcal{D}'$ of $\mathcal{X}$ with 
$\mathcal{O}_{\mathcal{X}}(\mathcal{D})=\mathcal{O}_{\mathcal{X}}(1), \mathcal{O}_{\mathcal{X}}(\mathcal{D}')=
\mathcal{O}_{\mathcal{X}}(-mK_{\mathcal{X}})$ (the latter is only a reflexive sheaf), 
which satisfies that $\mathcal{D}-\mathcal{D}'$ supports on $\pi^{-1}(\Sigma)$. 
In the meantime, any (a priori Weil-)divisor supported 
on the central fiber is a pull back of (Cartier) divisor of $S$ supported on 
$\Sigma$ since all the fibers of $\pi$ are irreducible now. Hence, we 
get $\mathcal{O}(1)\sim_{C}\mathcal{O}(-mK_{\mathcal{X}})$. 

Furthermore, the subset ${\it Hilb}_{\text{klt}}$ 
of ${\it Hilb}_{\text{normal.singular}}$ which parametrises 
(kawamata-)log-terminal varieties is a Zariski open subset, which follows from the 
arguments of \cite[Appendix A]{AH} 
(even easier than that since we only treat normal varieties). In particular, 
$V_{[X_{i}]}$ only parametrises $\mathbb{Q}$-Fano varieties, since each variety 
parametrised in $V_{[X_{i}]}$ has some isotrivial degeneration to a variety 
parametrised in $V_{[X_{i}]}^{ps}$ which is automatically a $\mathbb{Q}$-Fano 
variety. Summarising up, we proved the 
assertion ($(ii)$ of Definition \ref{KE.stack}). 

The topological space structure part is proved in Proposition \ref{homeo}. 
Indeed, note that Proposition \ref{homeo} shows that the Gromov-Hausdorff compactification 
$\bar{M}^{GH}$ is homeomorphic to the coarse moduli space $\bar{M}$ constructed 
above. In particular it shows $\bar{M}$ satiefies the Hausdorff second axiom 
(essentially follows from \cite{CDS}+\cite{LWX}(v1)+\cite{SSY} cf., Theorem \ref{sep}). 
So we complete the proof of Theorem \ref{Main.thm}. 
\end{proof}

\begin{Rem}\label{last.comments}
\textit{This remark is newly put in our revision which appears in the $20$th of March, 2015. }
Our construction of the moduli stacks $\bar{\mathcal{M}}$ and their 
coarse moduli spaces $\bar{M}$ 
a priori depend on the positive integer parameter $m$ (please recall that we consider the $m$-th 
pluri-anti-canonical polarisation of the $\mathbb{Q}$-Fano varieties). 
However we strongly believe 
that they actually do \textit{not} depend on the sufficiently divisible $m$. 
Indeed, we can prove it under the following two hypotheses. 
To the best of the author's knowledge (as of March, 2015) full proofs 
of the hypotheses below are not available yet, although the revision ($2$nd version) of \cite{LWX} 
have partial affirmative results (cf., their section 7) in this direction. 

\begin{enumerate}
\item The K-semistability is an \textit{open condition} for any $\mathbb{Q}$-Gorenstein flat projective family of $\mathbb{Q}$-Fano ($\mathbb{Q}$-Gorenstein smoothable) varieties. 
\item  For any ($\mathbb{Q}$-Gorenstein smoothable) K-semistable 
$\mathbb{Q}$-Fano variety, say $X$, it has a test configuration whose central fibre is a KE 
$\mathbb{Q}$-Fano variety $Y$ (which is K-polystable by \cite{Ber}). 
\end{enumerate}

Our proof of the desired 
$m$-independence of our moduli $\bar{\mathcal{M}}$ and $\bar{M}$, under the 
hypotheses, is simple as follows. 
The above hypotheses 
imply that $W$ coincides exactly with the (open) locus of 
\textit{$\mathbb{Q}$-Gorenstein smoothable 
K-semistable $\mathbb{Q}$-Fano varieties}, 
which we denote as ${\it Hilb}^{sss}$. We prove it as follows. 
Recall that each $\mathbb{Q}$-Fano variety 
corresponding to a point of $W$, isotrivially degenerates to a KE $\mathbb{Q}$-Fano variety 
parametrises in ${\it Hilb}^{KE}$ by our Theorem \ref{local.GIT} and the standard GIT. 
That fact, combined with the first hypothesis (i) implies $W\subset {\it Hilb}^{sss}$. 
On the other hand, (ii) and \cite{DS} (especially their uniform bound of ``$k$") 
imply ${\it Hilb}^{sss}\subset W$ straightforwardly. Thus 
our KE moduli stack $\bar{\mathcal{M}}$, which is isomorphic to the quotient stack $[W/{\it PGL}]$ whose definition involved $m$, is exactly the moduli Artin stack of $\mathbb{Q}$-Gorenstein flat projective family of 
K-semistable $\mathbb{Q}$-Gorenstein smoothable $\mathbb{Q}$-Fano varieties of dimension $n
$. It is this universality which automatically implies that the moduli stacks 
$\bar{\mathcal{M}}$ do not depend 
on the integer $m$. In particular, their 
coarse moduli spaces $\bar{M}$ also do not depend on $m$. 
 
We also make a brief mathematical remark in this revision (March, 2015) for the readers'  
convenience, about the mathematical relation with the $2$nd version of \cite{LWX}. 
It is that the moduli space constructed in the 
$2$nd version of \cite{LWX} is the semi-normalisation of reduced subscheme of our moduli. 
\end{Rem}


\section{For future}

It may be needless to mention but 
the author would like to note that there are quite a lot of interesting 
problems to do 
from now on the K-moduli of Fano varieties, and we list some main of them possibly 
with my personal biase. 
Most of them (perhaps other than Question \ref{Q2}) are natural and 
being shared among the community of this subject and we just write down for the 
record. 

\begin{Ques}\label{Q1}
How about \textit{concrete} examples of $\mathbb{Q}$-Fano varieties? 
\end{Ques}

As far as the author knows, the only fully settled case is \cite{MM},\cite{OSS} 
which are for ($\mathbb{Q}$-Gorenstein smoothable) Del Pezzo surfaces. 
The author would guess \cite[Lemma 3.6]{OSS} and our Theorem \ref{local.GIT} 
will be one of the key tools for this 
direction. For example, the author is tempted to expect 
that many of the standard GIT 
moduli spaces of hypersurfaces, such as cubic $3$-folds and 
$4$-folds case (\cite{All}, \cite{Laza}, \cite{Yok1}, \cite{Yok2}), 
are examples of our K-moduli spaces 
(cf., \cite[Theorem 3.4 and subsection 4.2]{OSS}). The last prediction is 
partially inspired by discussions with Julius Ross. 

\begin{Ques}\label{Q2}
How to construct Gromov-Hausdorff limit of K\"ahler-Einstein Fano manifolds 
(and the K-moduli construction) in purely \textit{algebraic} way? 
\end{Ques}

It is natural to expect that the (refined) GH limit, in the sense of \cite{DS},\cite{OSS} etc, 
is simply equivalent to K-polystable 
limit and then, partially inspired by \cite{LX}, \cite[last section]{Od2} (etc), 
characterised by the minimality of 
the degree of (family version of) Donaldson-Futaki invariant. 
And we further expect that the construction 
will essentially need the idea and theory of the Minimal Model Program. 

\begin{Ques}\label{Q3}
How about \textit{non-smoothable} $\mathbb{Q}$-Fano varieties? 
\end{Ques}

This is the much more general case, 
morally about the moduli space all of whose members parametrise 
\textit{singular} (log-terminal) $\mathbb{Q}$-Fano varieties. 
At this moment, we (and \cite{SSY},\cite{LWX} etc.) all heavily depend on 
the ($\mathbb{Q}$-Gorenstein) smoothability of Fano varieties in concern, 
in order to apply \cite{CDS}, \cite{Tia2} which are for smooth Fano 
\textit{manifolds}. 
But as many algebraically oriented people agree as 
they told, it is natural to expect the completely same picture for 
\textit{general $\mathbb{Q}$-Fano varieties}. 

\begin{Ques}\label{Q4}
What about the \textit{projectivity} of our moduli space? 
\end{Ques}

The expectation is that ``descended'' $\mathbb{Q}$-line bundle from the CM line 
bundle \cite{FS}, \cite{PT} 
explained (with the proof of descending phenomenon) 
at the end of \cite{OSS}, will be \textit{ample} on the coarse compact 
moduli space $\bar{M}$, ensuring the projectivity. 
The expectation is based on the general 
\textit{Weil-Petersson} metrics as in \cite{FS}. Indeed by \cite{FS}, 
any compact analytic subset of the coarse moduli space of 
\textit{smooth} KE Fano manifolds with \textit{discrete} automorphism groups 
(constructed in \cite{Od2}) is projective. 
However, to treat general case, there are two main 
technical difficulties which are 
the presence of non-discrete automorphism groups 
(involving K-\textit{semi}stable varieties) and the log-terminal singularities. 

(Added in the revision of March, 2015:) Two and a half 
months after when the first manuscript of this paper 
appears, \cite{LWX2} made an announcement of a partial 
progress along this line and 
claimed the quasi-projectivity of the open locus $M$ of $\bar{M}$.




\begin{thebibliography}{FGA}

\bibitem[AH]{AH}
D.~Abramovich, B.~Hassett, 
Stable varieties with a twist, arXiv:0904.2797 (2009). 

\bibitem[Ale]{Ale}
V.~Alexeev, Log canonical singularities and complete moduli of stable pairs, 
arXiv:9608013 (1996). 

\bibitem[All]{All}
D.~Allcock, 
The moduli space of cubic threefolds, 
J.\ Algebraic Geom.\ 
(2003). 

\bibitem[Alp]{Alp}
J.~Alper, On the local structure of Artin stacks, Journal of Pure and Applied Algebra, 
(2010). 

\bibitem[AK]{AK}
J.~Alper, A.~Kresch, 
Equivariant versal deformations of semistable curves, 
preprint. 

\bibitem[Art]{Art}
M.~Artin, Algebraic spaces, 
Yale Mathematical Monographs vol.\ 3, 
Yale University Press (1971). 

\bibitem[Ber]{Ber}
R.~Berman, 
K-polystability of $\mathbb{Q}$-Fano varieties admitting 
K\"ahler-Einstein metrics, arXiv:1205.6214 (2012). 

\bibitem[CDS]{CDS}
X.~Chen, S.~Donaldson, S.~Sun, 
K\"ahler-Einstein metrics on Fano manifolds I,II,III. 
J.\ A.\ M.\ S.\ (2015). 

\bibitem[DP]{DP}
C.~DeConcini, C.~Procesi, 
Complete symmetric spaces, Springer-Verlag Lecture Notes in 
Mathematics vol.\ 996, (1983). 

\bibitem[DM]{DM}
P.~Deligne, D.~Mumford, 
The irreducibility of the space of curves of given genus, 
Publ.\ I.\ H.\ E.\ S.\ (1969). 

\bibitem[Don0]{Don0}
S.~Donaldson, Scalar curvature and stability of toric varieties, 
J.\ Diff.\ Geom (2002). 

\bibitem[Don1]{Don1}
S.~Donaldson, 
Kahler geometry on toric manifolds, and some other manifolds with large symmetry, 
arXiv:0803.0985 (2008). 

\bibitem[Don2]{Don2}
S.~Donaldson, 
Algebraic families of constant scalar curvature K\"{a}hler metrics, arXiv:1503.05174. 
(cf. also the record of his talk available at \\ 
\texttt{http://www.math.u-psud.fr/\~{}repsurf/ERC/Bismutfest/Donaldson.PDF})

\bibitem[Dre]{Dre}
J-M.~Dr\'ezet, Luna's slice theorem and applications, 
Algebraic group actions and quotients, 39-89, Hindawi Publ.\ Corp., Cairo, 
(2004). 

\bibitem[DS]{DS}
S.~Donaldson, S.~Sun, 
Gromov-Hausdorff limits of Kahler manifolds and algebraic geometry, 
Acta Math.\ (2014). 

\bibitem[FR]{FR}
J.~Fine, J.~Ross, 
A note on the positivity of CM line bundle, 
I.\ M.\ R.\ N.\ (2006). 

\bibitem[FS]{FS}
A.~Fujiki, G.~Schumacher, 
The moduli space of extremal compact Ka¨hler manifolds and generalized Weil-Petersson metrics, Publ.\ R.\ I.\ M.\ S.\ (1990). 

\bibitem[GIT]{GIT}
D.~Mumford, Geometric Invariant Theory (book), 
Springer-Verlag (1965). 

\bibitem[Kol]{Kol}
J.~Koll\'ar, 
A book in preparation. 

\bibitem[KSB]{KSB}
J.~Koll\'ar, N.~Shepherd-Barron, 
Threefolds and deformations of surface singularities, 
Invent.\ Math.\ (1988). 

\bibitem[KMM]{KMM}
J.~Koll\'ar, Y.~Miyaoka, S.~Mori, 
Rational connectedness and boundedness of Fano manifolds, 
J.\ Diff.\ Geom.\ (1992). 

\bibitem[LM]{LM}
G.~Laumon, L.~Moret-Bailly. Champs alg\'{e}briques, 
Springer-Verlag (2000). 

\bibitem[Laza]{Laza}
R.~Laza, 
Moduli space of cubic fourfolds, 
J.\ Algebraic Geom.\ vol.\ 18 (2009). 

\bibitem[Luna]{Luna}D.~Luna, 
Slices \'{e}tales, Sur les groupes alg\'{e}briques, 
Bull.\ Soc.\ Math.\ France (1973). 

\bibitem[LWX]{LWX} C.~Li, X.~Wang, C.~Xu, 
Degeneration of Fano K\"ahler-Einstein manifolds, 
arXiv:1411.0761\textbf{v1} (2014). 

\bibitem[LWX2]{LWX2} C.~Li, X.~Wang, C.~Xu, 
Quasi-projectivity of the moduli space of smooth K\"ahler-Einstein Fano manifolds, 
arXiv:1502.06532 (2015). 

\bibitem[LX]{LX} 
C.~Li, C.~Xu, 
Special test configurations and K-stability of Fano varieties, 
Ann.\ of Math.\ (2014). 

\bibitem[MM]{MM}
T.~Mabuchi, S.~Mukai, 
Stability and Einstein-K\"{a}hler metric of a quartic del Pezzo surface, 
(1993). Proceeding of $27$th Taniguchi sympsium at 1990. 

\bibitem[Mab1]{Mab1}
T.~Mabuchi, 
K-stability of constant scalar curvature polarization, 
arXiv:0812.4903 (2008). 

\bibitem[Mab2]{Mab2}
T.~Mabuchi, 
A stronger concept of K-stability, 
arXiv:0910.4617 (2009). 

\bibitem[Mat]{Mat}
Y.~Matsushima, 
Sur la structure du groupe d'hom\'{e}omorphismes analytiques d'une certaine 
vari\'{e}t\'{e} k\"{a}hl\'{e}rienne, 
Nagoya Math.\ vol.\ \textbf{11} 145-150, (1957). 


\bibitem[Od0]{Od0}
Y.~Odaka, 
On the GIT stability of Polarised Varieties - a survey -, 
Proceeding of Kinosaki symposium (2010). available at his website. 

\bibitem[Od1]{Od1}
Y.~Odaka, 
The Calabi conjecture and K-stability, 
I. M. R. N (2012). 

\bibitem[Od2]{Od2}
Y.~Odaka, On the moduli of K\"{a}hler-Einstein Fano manifolds, 
Proceeding of Kinosaki symposium (2013), available at arXiv:1211.4833

\bibitem[OSS]{OSS}
Y.~Odaka, C.~Spotti, S.~Sun, 
Compact moduli space of Del Pezzo surfaces and K\"ahler-Einstein metrics, 
to appear in J.\ Diff.\ Geom.\ arXiv:1210.0858. 

\bibitem[PT]{PT}
S.~Paul, G.~Tian, 
CM stability and the generalized Futaki invariant I, 
arXiv:0605278 (2006). 

\bibitem[SB]{SB}
N-I.~Sherpherd-Barron, 
Degeneration with numerically effective canoni-
cal divisor, The Birational Geometry of Degenerations, Birkhauser, (1983). 

\bibitem[Spo]{Spo}
C.~Spotti, Degenerations of K\"ahler-Einstein Fano manifolds, Ph.D thesis 
supervised by S.Donaldson (Imperial college London). available at arXiv:1211.5334. 

\bibitem[SSY]{SSY}
C.~Spotti, S.~Sun, C.~Yao, 
Existence and deformations of K\"{a}hler-Einstein metrics on smoothable 
$\mathbb{Q}$-Fano varieties, arXiv:1411.1725 (2014). 

\bibitem[Sum]{Sum}
H.~Sumihiro, 
Equivariant completions I, 
J.\ Math.\ Kyoto Univ.\ (1974). 

\bibitem[Sze]{Sze}
G.~Szekelyhidi, 
The K\"ahler-Ricci flow and K-polystability, 
Am.\ J.\ Math.\ vol.\ 132 (2010). 

\bibitem[Tia]{Tia}
G.~Tian, 
K\"ahler-Einstein metrics with positive scalar curvature, 
Invent.\ Math.\ (1997). 

\bibitem[Tia2]{Tia2}
G.~Tian, 
K-stability and K\"ahler-Einstein metrics, 
arXiv:1211.4669 (2012). 



\bibitem[Yok1]{Yok1}
M.~Yokoyama, 
Stability of cubic $3$-folds, 
Tokyo J.\ Math., 25, (2002). 

\bibitem[Yok2]{Yok2}
M.~Yokoyama, 
Stability of cubic hypersurfaces of dimension $4$, 
RIMS Kokyuroku Bessatsu B9 (2008). available at 
\texttt{http://www.kurims.kyoto-u.ac.jp/~kenkyubu/bessatsu-j-back.html}


\end{thebibliography}
\end{document}